\tikzset{
  node/.style={on grid},
}
\def\@endtheorem{\endtrivlist}
\newtheorem{thm}{Theorem}[section]
\newtheorem{lem}[thm]{Lemma}
\newtheorem{prop}[thm]{Proposition}
\newtheorem{defn}{Definition}[section]
\theoremstyle{remark}
\newtheorem{rem}{Remark}
\numberwithin{equation}{section}
\begin{document}

\title[\(T^2\)-Symmetric Expanding Stability]{Stability Within \(T^2\)-Symmetric Expanding Spacetimes}

\author{Beverly K. Berger}
\address{Edward L. Ginzton Laboratory\\
Stanford University\\
Stanford, CA 94305-4088\\
USA}
\email{beverlyberger@me.com}

\author{James Isenberg}
\address{Department of Mathematics\\
University of Oregon\\
Eugene, OR  97403-1222\\
USA}
\email{isenberg@uoregon.edu}

\author{Adam Layne}
\address{Department of Mathematics\\
KTH Royal Institute of Technology\\
100 44 Stockholm\\
Sweden}
\email{layne@kth.se}

\date{\today}

\begin{abstract}
We prove a nonpolarised analogue of the asymptotic characterization of \(T^2\)-symmetric Einstein Flow solutions completed recently by LeFloch and Smulevici.
In this work, we impose a condition weaker than polarisation and so our result applies to a larger class.
We obtain similar rates of decay for the normalized energy and associated quantities for this class.
We describe numerical simulations which indicate that there is a locally attractive set for \(T^2\)-symmetric solutions not covered by our main theorem.
This local attractor is distinct from the local attractor in our main theorem, thereby indicating that the polarised asymptotics are unstable.
\end{abstract}

\maketitle

\section{introduction}

There exist broad conjectures about the expanding direction behavior of vacuum spacetimes with closed Cauchy surfaces \cite{MR1888088,MR1894914}, but currently little is known about some of the most elementary examples.
Recent results \cite{MR3513138,MR3312437} have demonstrated that certain vacuum cosmological models demonstrate locally stable behavior in the expanding direction, but that well-known subclasses are unstable.
These results should be compared to models with matter \cite{Radermacher:2017mkr,MR2960029,MR3874696,MR3739229} where spatially homogeneous solutions are known to be stable.
It is also important to note that the behavior of these models in the direction of the singularity is not sensitive to the presence of most types of matter \cite{MR0363384}.

In the special case that the spacetime has spatial topology \(T^3\), admits two spacelike Killing vector fields (such spacetimes are called \emph{\(T^2\)-symmetric}), and satisfies a further technical condition (that the spacetime is \emph{polarised}) results of \cite{MR3513138} show that there is a local attractor of the Einstein Flow in the expanding direction.
It is natural to ask whether the condition that the spacetime be polarised is necessary.
Do spacetimes on \(T^3\) with two spacelike Killing vector fields necessarily become effectively polarised?
Do they then flow to the polarised attractor?

We partially resolve these questions by analytic and numerical means.
Our main theorem states that solutions which are not polarised have the expanding direction asymptotics of polarised solutions if they satisfy a certain weaker condition: that one of the two conserved quantities of the flow vanishes.
We call such solutions \emph{\(B_0\)} or \emph{\(B=0\) solutions}.
The conserved quantity \(B\) vanishes for all polarised solutions; the set of \(B=0\) solutions is of codimension one in the space of all solutions in these coordinates while the set of polarised solutions is of infinite codimension.

It was shown in \cite{MR1474313} that \(T^2\)-symmetric vacuum spacetimes posess a global foliation; all such Einstein Flows have a metric of the form
\begin{align}
		g
=	&	e^{\widehat l - V + 4\tau} \left( - d\tau^2 + e^{2(\rho - \tau)} d\theta^2 \right) 	+e^{V} \left[ dx + Q dy + (G + Q H ) d\theta \right]^2 +  e^{-V+2\tau} \left[ dy + H d\theta \right]^2 				\label{coords}	
\end{align}
where $\partial_x$ and $\partial_y$ are the Killing vector fields.
The area of the \(\{\partial_x, \partial_y\}\) orbit is \(e^{2\tau}\), so the singularity occurs as \(\tau \to -\infty\) and the spacetime expands as \(\tau \to \infty\).
Relative to the coordinates \(t,P,\alpha,\lambda\) used in  \cite{MR3312437}, our quantities are given by
\begin{align}
\begin{array}{rlrl}
   \tau :=&  \log t	,				&
  \rho  :=& -\frac{1}{2} \log \alpha 					\\
V :=& P + \log t 		,			&
    \widehat l:=&P + \frac{1}{2} \lambda - \frac{3}{2} \log t. 					
\end{array}
\end{align}
See the Appendix for a complete concordance of notations between the cited papers and the present work.
In the coordinates \eqref{coords}, the Einstein Flow is
\begin{align}
\partial_\tau \left( e^\rho V_\tau \right)
=& \partial_\theta \left( e^{2\tau - \rho} V_\theta \right)+ e^{2(V-\tau)+\rho} \left( Q_\tau^2 - e^{2(\tau-\rho)} Q_\theta^2 \right) \label{v transport}					\\
\partial_\tau \left( e^{\rho + 2 (V - \tau)} Q_\tau \right) \label{q transport}
=& \partial_\theta \left( e^{-\rho +2 V} Q_\theta \right)				\\
\widehat l_\tau + \rho_\tau + 2 
 =&\frac{1}{2} \left[ V_\tau^2 + e^{2(\tau-\rho)} V_\theta^2 + e^{2(V-\tau)} \left( Q_\tau^2 + e^{2(\tau-\rho)}  Q_\theta^2 \right) \right]\label{l-evol} 					\\
 \rho_\tau
 =&  K^2e^{\widehat l}\label{rho-evol}					\\
 \widehat l_\theta=&V_\theta V_\tau + e^{2(V-\tau)} Q_\theta Q_\tau. \label{constraint}					
\end{align}
The last equation is the momentum constraint, and it is preserved by the evolution equations.
Equation \eqref{rho-evol} is a consequence of the constraints; \(\rho\) satisfies a wave equation similar to \eqref{v transport} which can be derived as a consequence of \eqref{l-evol} and \eqref{rho-evol}, so we take equations \eqref{v transport} through \eqref{rho-evol} to be the evolution equations instead.
There are, in addition, evolution equations for \(G,H\), but these may be integrated once \(V,Q,\rho,\widehat l\) have been found, so these latter four functions are the ones of interest.
As a consequence of \eqref{v transport} and \eqref{q transport}, there are two conserved quantities along the flow:
\begin{align}
A:=&\int_{S^1}  e^\rho \left( V_\tau  - e^{2(V-\tau)} Q_\tau Q \right) \, d\theta			\\
B:=&\int_{S^1}  e^{\rho+2(V-\tau)} Q_\tau  \, d\theta.			
\end{align}
The condition \(Q \equiv 0\) is often imposed when studying these solutions in the collapsing direction.
Such solutions are called \emph{polarised}.
(Note that all polarised solutions have \(B = 0\), but not all \(B = 0\) solutions are polarised.)
The constant \(K\) is, without loss of generality, that ``twist constant'' which cannot in general be made to vanish by a coordinate transformation.
The \(T^3\) \emph{Gowdy models} \cite{MR0339764} are those for which \(K =0\).
\(T^2\)-symmetric spacetimes which are polarised, half polarised \cite{MR1939922,clausenthesis}, or Gowdy have been studied extensively in the contracting direction (e.g. \cite{MR3085923}).
We are here concerned only with the expanding direction.

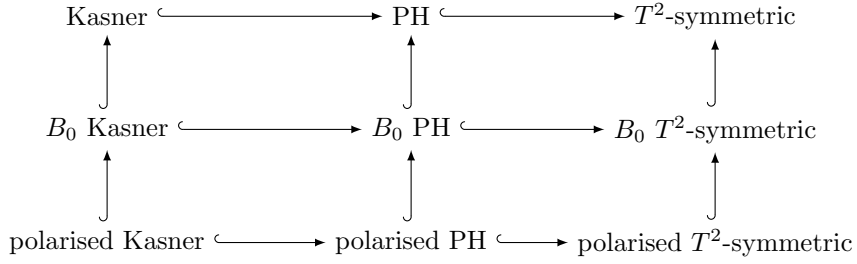
\begin{figure}
   \centering
   \subfloat{
\begin{tikzpicture}[%
  back line/.style={right hook-latex},
  rcross line/.style={preaction={draw=white, -,line width=6pt}, right hook-latex},
  lcross line/.style={preaction={draw=white, -,line width=6pt}, left hook-latex}]

  \node (A) {Kasner};
  \node [right of=A, node distance=4cm] (C) {PH};
  \node [right of=C, node distance=4cm] (D) {\(T^2\)-symmetric};
  \draw[rcross line] (A) -- (C);
  \draw[rcross line] (C) -- (D);

  \node [below of=A, node distance=1.5cm] (A1) {\(B_0\) Kasner};
  \node [below of=C, node distance=1.5cm] (C1) {\(B_0\) PH};
  \node [below of=D, node distance=1.5cm] (D1) {\(B_0\) \(T^2\)-symmetric};
  \draw[rcross line] (A1) -- (C1);
  \draw[rcross line] (C1) -- (D1);

  \node [below of=A1, node distance=1.5cm] (A2) {polarised Kasner};
  \node [below of=C1, node distance=1.5cm] (C2) {polarised PH};
  \node [below of=D1, node distance=1.5cm] (D2) {polarised \(T^2\)-symmetric};
  \draw[rcross line] (A2) -- (C2);
  \draw[rcross line] (C2) -- (D2);
  
  \draw[rcross line] (A2) -- (A1);
  \draw[rcross line] (A1) -- (A);
  \draw[rcross line] (C2) -- (C1);
  \draw[rcross line] (C1) -- (C);
  \draw[rcross line] (D2) -- (D1);
  \draw[rcross line] (D1) -- (D);
 
\end{tikzpicture}
	}
	\caption[Classes of Einstein Flows under consideration.]{The classes of Einstein Flow solutions discussed in this paper, and their inclusions.
	We have omitted the Gowdy models, which are not the focus of this work.}\label{com}
\end{figure}

The \emph{Kasner} models are those which are spatially homogeneous (\(\widehat l,V,Q\) are independent of \(\theta\)) and satisfy \(K= 0\).
Let us note that, in our coordinates, polarised Kasner solutions \cite{MR1501305} take the form
\begin{align}
V = a \tau + b , \quad \widehat l = \left(\frac{1}{2}a^2 - 2 \right) \tau + c
\end{align}
for some constants \(a,b,c \in \mathbb R\).
The Gowdy models contain all Kasners, and in the expanding direction the dynamics of Gowdy solutions are known \cite{MR2032917}, \cite{2017arXiv170702803R} and appear to be very different from those of non-Gowdy solutions.
Non-Gowdy solutions such that \(\widehat l,V,Q\) are independent of \(\theta\) are called \emph{pseudo-homogeneous} or \emph{PH}.
This definition appears in \cite{MR3312437}, where it is shown that the future asymptotics are of the form
\begin{align}
\left|V -( a \tau + b ) \right| \to 0, \quad \left|\widehat l -\left( \left[\frac{1}{2}a^2 - 2 \right] \tau + c\right) \right| \to 0 , \quad a\in (-2,2).
\end{align}
That is, PH solutions have asymptotics of the same form as a Kasner solution, but the value of \(V_\tau\) at \(\tau  = \infty\) is restricted.

In contrast to these examples, in \cite{MR3513138} the authors find a set of non-Gowdy, polarised solutions such that 
\begin{align}
\left|V -b \right| \to 0, \quad \left|\widehat l - c \right| \to 0 . \label{ls asymptotics}
\end{align}
The results in \cite{MR3312437} and \cite{MR3513138} are much more detailed than the above statements; we give this simple description only to demonstrate that an instability arises; no polarised Kasner or PH solutions can have future behavior of the form \eqref{ls asymptotics}.
The relationships between these sets of solutions are given in Figure \ref{com}.

Previous to this work, numerical simulations conducted by Berger \cite{bergerpcgm, bergeraps} indicated that all \(T^2\)-symmetric solutions, without regard to the polarisation or smallness conditions imposed in \cite{MR3513138}, flowed toward the polarised attractor \eqref{ls asymptotics}.
In addition, in \cite{MR3312437} it is shown that within the neighborhood of each polarised PH solution is a polarised non-PH solution with future asymptotics of the form \eqref{ls asymptotics}.

Before giving a description of our main theorem, let us note the sense in which we use the word ``attractor.''
Our technique of proof follows \cite{MR3513138}.
Let us denote the right side of \eqref{l-evol} by \(J\).
The idea of the proof is to treat the asymptotic regime of the solution as a wave equation for \(V,Q\) coupled to an ordinary differential equation (up to some error terms) for the means in the \(\theta\)-direction of \(e^\rho, e^{\widehat l}, J\).
The smallness assumptions are then used to guarantee that the errors decay, and so the behavior of the means of \(  e^\rho , e^{\widehat l}, J\) approaches the behavior of the solution of the ODE.
When we use the word ``attractor'' here, we refer to the dynamics of the \(e^\rho, e^{\widehat l}, J\) system; a solution \( V',  Q',  \rho',  \widehat l'\) is not generally a proper attractor of the flow in the sense that
\begin{align}
\|V -  V'\| + \| Q -  Q'\| + \| \rho -  \rho' \| + \| \widehat l - \widehat l '\| \longrightarrow 0
\end{align}
in \(C^k\) norm.

Our main theorem states roughly that the condition \(B=0\) suffices to ensure that a solution has polarised asymptotics if it begins sufficiently close to the asymptotic regime.
In the latter portion of the paper, we present numerical evidence that the condition \(B=0\) is necessary for the solution to have polarised asymptotics and flow toward the polarised attractor.
There appears to be an attractor for solutions satisfying \(B\ne 0\), which shares some formal properties with the \(B=0\) attractor.
However, such solutions flow away from the \(B=0\) attractor, and so the \(B=0\) asymptotics appear to be unstable.

Since the future behavior of Gowdy and PH solutions is understood, we are only concerned with non-Gowdy, non-PH solutions; that is, solutions with \(K\ne 0\) and \(\int_{S^1} e^\rho \, d\theta\) unbounded as \(\tau \to \infty\).
In this case, we shift \(\widehat l\) by a constant 
\begin{align}
l := \widehat l + \log ( K^2 /2 )			
\end{align}
so that
\begin{align}
\widehat l_\theta = l_\theta, \quad \widehat l_\tau = l_\tau \quad \text{and} \quad \rho_\tau  = e^l.
\end{align}
In the rest of the paper, we assume solutions are non-Gowdy and so change variables to \(l\) to avoid writing factors of \(K\).

Before proceeding with the proof, it is important to note that there is some very interesting work on the rescaling limits of certain expanding spacetimes \cite{2017arXiv170105150L,2018CQGra..35c5010L}.
The earlier of these works uses techniques from the study of Ricci Flow to analyze the rescaling limits of CMC-foliated expanding spacetimes.
The latter work is concerned with the extent to which rescaling limits of the spacetimes considered in \cite{MR3513138} have a nonzero Einstein tensor.
It is likely that this result can be generalized to the class of solutions considered in this paper.

\textit{Acknowledgements.}
We are grateful to David Maxwell, Peng Lu, Paul T Allen, Florian Beyer, Piotr Chru\'sciel, Anna Sakovich, and Hans Ringstr\"om for providing useful comments on various parts of this project.

The second and third authors were supported by NSF grants DMS-1263431 and PHY-1306441.
This article was in part written during a stay of the third author at the Erwin Schr\"odinger Institute in Vienna during the thematic program `Geometry and Relativity'.
This paper incorporates material that previously appeared in the third author's dissertation which was submitted to the Department of Mathematics in partial fulfillment of the requirements for the degree of Doctor of Philosophy at the University of Oregon.

\section{Preliminary Computations}

\label{prelim chapter}
Before proceeding with the proof of the main theorem, we define the energy under consideration and calculate its evolution.
It is useful to have notation for the mean of a function in the \(\theta\)-direction.

\begin{defn}[\(S^1\)-mean]
For \(f \colon S^1 \to \mathbb R\), let
\begin{align}
\langle f \rangle  := \int_{S^1} f(\theta) \, d\theta.
\end{align}
\end{defn}
Note that in \cite{MR3513138}, the authors choose to use the volume form \( e^\rho \, d\theta \) for their mean.
Our choice is almost identical to that used in \cite{MR3312437}, but we normalize so that \(\int_{S^1} d\theta = 1\).
Either choice would suffice.

Define the following energy
\begin{align}
J:=&\frac{1}{2} \left[ V_\tau^2 + e^{2(\tau - \rho )}V_\theta^2 + e^{2(V-\tau)} \left( Q_\tau^2 + e^{2(\tau - \rho )}Q_\theta^2 \right) \right], \\
E:=& \int_{S^1}e^{\rho -2\tau} J \, d\theta =\frac{1}{2}\int_{S^1}   e^{\rho -2\tau}V_\tau^2 + e^{- \rho }V_\theta^2 +  e^{2V-4\tau+\rho}Q_\tau^2 + e^{2(V -\tau)- \rho }Q_\theta^2   \, d\theta,
\end{align}
and the \(S^1\)-volume
\begin{align}
\Pi:=& \langle e^\rho \rangle  =\int_{S^1}e^{\rho}  \, d\theta .
\end{align}
Note that equation \eqref{l-evol} now reads \(l_\tau + \rho_\tau + 2  =J \).
We use the terms \(V\)-energy and \(Q\)-energy loosely to refer to \(V_\tau^2 + e^{2(\tau - \rho )}V_\theta^2\) and \(e^{2(V-\tau)} \left( Q_\tau^2 + e^{2(\tau - \rho )}Q_\theta^2 \right)\), respectively.
One may compute using the evolution equations for $V$ and $Q$ that
\begin{align}
		\partial_\tau \left(e^\rho J \right)
=	&	2e^\rho J - \rho_\tau e^\rho J - e^\rho V_\tau^2 - e^{2V - \rho} Q_\theta^2 + \partial_\theta \left( e^{2\tau - \rho} V_\theta V_\tau + e^{2V - \rho} Q_\theta Q_\tau \right)					
\end{align}
so the energy \(E\) evolves by
\begin{align}
E_\tau = \int_{S^1} -\rho_\tau e^{\rho - 2 \tau} J - e^{\rho-2\tau} V_\tau^2 - e^{2(V-\tau) - \rho} Q_\theta^2\, d\theta.
\end{align}
The terms \(- e^{\rho-2\tau} V_\tau^2 - e^{2(V-\tau) - \rho} Q_\theta^2\) appearing here are undesirable for proving energy inequalities.
This necessitates the modification of \(E\) by a term which trades \(V_\tau^2\) for \(V_\theta^2\).
This is the main topic of Section \ref{correction chapter}.

\section{corrections and their bounds}

\label{correction chapter}
Define the correction
\begin{align}\label{cor def}
\Lambda :=& \frac{1}{2} e^{-2\tau} \int_{S^1} V_\tau  \left( V - \langle V \rangle - 1\right)e^\rho \, d\theta.
\end{align}
Corrections to the energy of essentially this form were used previously in the Gowdy case \cite{MR2032917} and in the existing results on $T^2$-symmetric spacetimes \cite{MR3312437, MR3513138}.
Our definition differs only slightly from those previously used.
Differentiating \eqref{cor def} and using integration by parts yields the two components of the \(V\)-energy, but with opposite sign.
This allows us to replace time derivatives by space derivatives, which may be bounded.
At the same time, the correction has better decay properties than the energy, and so we are able to draw conclusions about the energy in the expanding direction.

To trade \(V_\tau^2\) for \(V_\theta^2\) and \(Q_\tau^2\) for \(Q_\theta^2\), it would be more natural to consider the corrections
\begin{align}
\frac{1}{2} e^{-2\tau} \int_{S^1} V_\tau  \left( V - \langle V \rangle \right)e^\rho \, d\theta,\quad \text{and}\quad 
\frac{1}{2} e^{-2\tau} \int_{S^1} e^{2(V-\tau)} Q_\tau  \left( Q - \langle Q \rangle \right)e^\rho \, d\theta
\end{align}
separately as other authors have done.
Then, by differentiating the \(Q\)-correction one would hope to obtain terms of the form \(Q_\tau^2 - e^{2(\tau - \rho )}Q_\theta^2 \), perhaps with a leading factor.
Our definition exploits the fact that \eqref{v transport} contains exactly the expression that we would like to obtain from the \(Q\)-correction.

\begin{lem}
Consider a non-Gowdy \(T^2\)-symmetric Einstein Flow.
The correction defined in \eqref{cor def} evolves by
\begin{align}
\partial_\tau \Lambda\ =& -2 \Lambda - \frac{1}{2}  \left \langle   e^{-\rho} V_\theta^2 \right \rangle + \frac{1}{2}  \left \langle e^{\rho-2\tau}V_\tau^2 \right \rangle-\frac{1}{2} \left \langle V_\tau \right \rangle \left \langle  e^{\rho-2\tau} V_\tau   \right \rangle\\
&+ \frac{1}{2} e^{-2\tau} \int_{S^1} e^{2(V - \tau) + \rho} \left ( Q_\tau^2 - e^{2(\tau - \rho)} Q_\theta^2 \right)  \left( V - \left \langle V \right \rangle -1\right)\, d\theta .
\end{align}
\end{lem}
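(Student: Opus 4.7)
The plan is to differentiate the definition \eqref{cor def} directly, substitute the evolution equation \eqref{v transport} to eliminate the second time derivative $V_{\tau\tau}$, and then simplify using integration by parts on $S^1$. The $e^{-2\tau}$ prefactor immediately produces the $-2\Lambda$ term, so the task reduces to computing
\[
\tfrac{1}{2} e^{-2\tau} \int_{S^1} \partial_\tau\bigl[ e^\rho V_\tau (V - \langle V\rangle - 1)\bigr] \, d\theta.
\]
Expanding the $\tau$-derivative with the product rule gives three pieces: one containing $\partial_\tau(e^\rho V_\tau) = e^\rho V_{\tau\tau} + \rho_\tau e^\rho V_\tau$, one containing $\partial_\tau(V - \langle V\rangle) = V_\tau - \langle V_\tau\rangle$ (the $-1$ contributes nothing here), and no contribution from the $e^{-2\tau}$ beyond the $-2\Lambda$ already accounted for.

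The crucial observation — and really the reason this correction is engineered with the weight $e^\rho$ in the first place — is that rewriting the first piece via \eqref{v transport},
\[
e^\rho V_{\tau\tau} = -\rho_\tau e^\rho V_\tau + \partial_\theta\bigl(e^{2\tau - \rho} V_\theta\bigr) + e^{2(V-\tau)+\rho}\bigl(Q_\tau^2 - e^{2(\tau-\rho)} Q_\theta^2\bigr),
\]
cancels exactly the $\rho_\tau e^\rho V_\tau (V - \langle V\rangle - 1)$ term coming from differentiating $e^\rho$ in $\Lambda$. This leaves only the spatial-derivative piece and the $Q$-quadratic piece (which is precisely the last term in the statement, and already in final form).

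Next I would integrate by parts on the spatial term. Since $S^1$ has no boundary and $\partial_\theta(V - \langle V\rangle - 1) = V_\theta$, one gets
\[
\tfrac{1}{2} e^{-2\tau}\!\int_{S^1}\!\partial_\theta\!\bigl(e^{2\tau-\rho}V_\theta\bigr)(V-\langle V\rangle-1)\,d\theta = -\tfrac{1}{2}\langle e^{-\rho} V_\theta^2\rangle,
\]
producing the first energy-like term. Finally, the remaining piece $\tfrac{1}{2}e^{-2\tau}\langle e^\rho V_\tau(V_\tau - \langle V_\tau\rangle)\rangle$ splits by linearity of $\langle\cdot\rangle$ and factors out the constant $\langle V_\tau\rangle$ to yield $\tfrac{1}{2}\langle e^{\rho-2\tau} V_\tau^2\rangle - \tfrac{1}{2}\langle V_\tau\rangle\langle e^{\rho-2\tau} V_\tau\rangle$, which are the two middle terms in the claimed formula.

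I expect no substantial obstacle: this is a direct differentiation once one trusts the setup. The only place to be careful is the bookkeeping of the $-1$ in $(V - \langle V\rangle - 1)$ — it is inert under $\partial_\tau$ and under $\partial_\theta$, so it is carried along unchanged in both the $Q$-term and (implicitly, via the cancellation) the intermediate steps, and one must verify it survives verbatim in the final $Q$-integrand. The single subtle point is the cancellation of $\rho_\tau$; everything else is algebra and a single integration by parts.
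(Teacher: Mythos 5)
Your proposal is correct and follows essentially the same route as the paper: differentiate $\Lambda$, substitute the conservation-form evolution equation \eqref{v transport} for $\partial_\tau(e^\rho V_\tau)$ (your "cancellation of $\rho_\tau$" is just this substitution written out), integrate the spatial term by parts against $V-\langle V\rangle-1$, and split $\tfrac12\langle e^{\rho-2\tau}V_\tau(V_\tau-\langle V_\tau\rangle)\rangle$ into the two middle terms. All terms match the stated identity.
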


\begin{proof}
We compute straightforwardly using equations \eqref{v transport}, \eqref{q transport} and integration by parts.
From the definition of \(\Lambda\) we have
\begin{align}
\partial_\tau \Lambda 
=& -2 \Lambda + \frac{1}{2} e^{-2\tau} \int_{S^1}(e^\rho  V_\tau)_\tau   \left( V - \left \langle V \right \rangle -1\right)\, d\theta 
+ \frac{1}{2} e^{-2\tau} \int_{S^1} V_\tau  \partial_\tau\left( V - \left \langle V \right \rangle -1\right)e^\rho \, d\theta\\
=& -2 \Lambda \\
&+ \frac{1}{2} e^{-2\tau} \int_{S^1}\left[ e^{2\tau} \left( e^{-\rho} V_\theta \right)_\theta + e^{2(V - \tau) + \rho} \left ( Q_\tau^2 - e^{2(\tau - \rho)} Q_\theta^2 \right) \right]   \left( V - \left \langle V \right \rangle -1\right)\, d\theta \\
&+ \frac{1}{2} e^{-2\tau} \int_{S^1} V_\tau  \partial_\tau\left( V - \left \langle V \right \rangle-1 \right)e^\rho \, d\theta\\
=& -2 \Lambda + \frac{1}{2} e^{-2\tau} \int_{S^1} -e^{2\tau}  e^{-\rho} V_\theta^2  \, d\theta + \frac{1}{2} e^{-2\tau} \int_{S^1} V_\tau^2e^\rho \, d\theta\\
&+ \frac{1}{2} e^{-2\tau} \int_{S^1} e^{2(V - \tau) + \rho} \left ( Q_\tau^2 - e^{2(\tau - \rho)} Q_\theta^2 \right)  \left( V - \left \langle V \right \rangle -1\right)\, d\theta \\
&- \left \langle V_\tau \right \rangle \left \langle  \frac{1}{2}e^{\rho-2\tau} V_\tau   \right \rangle
\end{align}
which completes the proof.
\qedhere
\end{proof}

We modify the energy \(E\) by \(\Lambda\).
It is then desirable to know that \(\Lambda\) has better decay than \(E\).
To that end, note that
\begin{align}
\| V - \langle V \rangle \|_{C^0} 
\lesssim		\int_{S^1} | V_\theta | \, d\theta 
\leq			\left( \int_{S^1}  V_\theta^2 e^{-\rho}  \, d\theta \right)^{1/2} \Pi^{1/2}
\leq			(\Pi E)^{1/2}.\label{v sup bound}
\end{align}
As is standard (cf. \cite{2014arXiv1407.6298R}), we use the notation \(f \lesssim h\) to mean that there is a universal constant \(C > 0 \) such that \(f \leq C h\).

One finds the following bound using H\"older's Inequality.
\begin{lem}[\cite{MR3312437}, Lemma 72]
Consider a non-Gowdy \(T^2\)-symmetric Einstein Flow.
Then
\begin{align}\label{correction bound 1}
				\left| \Lambda + \left\langle \frac{1}{2} e^{\rho-2\tau}V_\tau \right\rangle \right| 
=				\left| \frac{1}{2} e^{-2\tau} \int_{S^1} V_\tau  \left( V - \langle V \rangle\right)e^\rho \, d\theta \right| 
\lesssim	e^{-\tau} \Pi E 
\end{align}
\end{lem}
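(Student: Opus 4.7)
The plan is to first establish the claimed equality by expanding the definition of \(\Lambda\), and then obtain the inequality by combining the \(C^0\) bound on \(V - \langle V \rangle\) already given in \eqref{v sup bound} with a Cauchy--Schwarz estimate on the remaining factor.

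For the equality, I would simply split the integrand in \eqref{cor def}. Writing \(V - \langle V \rangle - 1 = (V - \langle V \rangle) - 1\) and using linearity of the integral, the piece coming from the \(-1\) is exactly \(-\tfrac{1}{2} e^{-2\tau}\int_{S^1} V_\tau e^\rho \, d\theta = -\langle \tfrac{1}{2}e^{\rho - 2\tau} V_\tau\rangle\). Moving this to the left-hand side gives the stated identity.

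For the inequality, I would estimate the integral
\[
\left| \frac{1}{2} e^{-2\tau} \int_{S^1} V_\tau \left( V - \langle V \rangle \right) e^\rho \, d\theta \right|
\]
by pulling \(\|V - \langle V \rangle\|_{C^0}\) out of the integral, so that it is bounded by
\[
\frac{1}{2} e^{-2\tau} \|V - \langle V \rangle\|_{C^0} \int_{S^1} |V_\tau|\, e^\rho \, d\theta.
\]
The first factor is controlled by \((\Pi E)^{1/2}\) via \eqref{v sup bound}. For the second factor, splitting \(e^\rho = e^{\rho/2}\cdot e^{\rho/2}\) and applying Cauchy--Schwarz yields
\[
\int_{S^1} |V_\tau| e^\rho\, d\theta \leq \left(\int_{S^1} V_\tau^2 e^\rho\, d\theta\right)^{1/2} \Pi^{1/2},
\]
and the first factor here is at most \(\sqrt{2}\, e^\tau E^{1/2}\) since \(E\) contains \(\tfrac{1}{2} e^{\rho - 2\tau} V_\tau^2\) as a nonnegative summand. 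Multiplying everything together produces the bound \(e^{-2\tau} \cdot (\Pi E)^{1/2} \cdot e^\tau (\Pi E)^{1/2} = e^{-\tau} \Pi E\), up to universal constants.

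There is no real obstacle here: the result is essentially a bookkeeping exercise, and the only mild subtlety is noticing that the factor \(e^{-2\tau}\) in front of \(\Lambda\) combines with the \(e^\tau\) obtained from rewriting \(\int V_\tau^2 e^\rho\) in terms of the normalized quantity inside \(E\) to give the final factor \(e^{-\tau}\). This is precisely why the correction \(\Lambda\) (up to the mean term on the left) has better decay than the energy \(E\) itself, which is the desired feature motivating the definition.
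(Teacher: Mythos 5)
Your argument is correct and matches the paper's intended proof: the paper only remarks that the bound follows from H\"older's inequality (citing Lemma 72 of the reference), and your splitting of the \(-1\) term to get the identity, followed by the \(C^0\) bound \eqref{v sup bound} on \(V - \langle V\rangle\) and Cauchy--Schwarz on \(\int_{S^1} |V_\tau| e^\rho\, d\theta\), is exactly that computation with the details filled in. The bookkeeping of the exponential factors \(e^{-2\tau}\cdot e^{\tau} = e^{-\tau}\) is also right.
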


For the following bound on the \(Q\) correction, cf. \cite{MR3312437} Lemma 73, where the author assumes a uniform bound on \(\Pi\) which we don't assume here.
The proof is essentially the same.
\begin{lem}
For any a non-Gowdy \(T^2\)-symmetric Einstein Flow,
\begin{align}
\left|  e^{-2\tau} \int_{S^1} e^{2(V-\tau)} Q_\tau  \left( Q - \langle Q \rangle \right)e^\rho \, d\theta \right| \lesssim 	e^{-\tau}e^{2(\Pi E)^{1/2}}  \Pi E 
\end{align}
\end{lem}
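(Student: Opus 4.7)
The strategy is to adapt the argument used for the $V$-correction bound, factoring the weight so that Cauchy--Schwarz produces two factors both controllable by the $Q$-part of the energy, and then carefully accounting for the factors of $e^{V-\tau}$ via the $C^0$-bound on $V - \langle V \rangle$.

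First I would rewrite the weight in the integrand as
\begin{align}
e^{-2\tau}\,e^{2(V-\tau)}\,e^\rho \;=\; e^{2V-4\tau+\rho} \;=\; \bigl(e^{V-2\tau+\rho/2}\bigr)^{2},
\end{align}
which is precisely the weight appearing in the $Q_\tau^2$ contribution to $E$. Applying Cauchy--Schwarz to the split
\begin{align}
e^{-2\tau}\!\!\int_{S^1}\!\! e^{2(V-\tau)}Q_\tau(Q-\langle Q\rangle)e^\rho\,d\theta
=\int_{S^1}\bigl(e^{V-2\tau+\rho/2}Q_\tau\bigr)\bigl(e^{V-2\tau+\rho/2}(Q-\langle Q\rangle)\bigr)\,d\theta
\end{align}
bounds the first factor immediately by $\sqrt{2E}$ by the definition of $E$, and reduces the problem to estimating $\bigl\|e^{V-2\tau+\rho/2}(Q-\langle Q\rangle)\bigr\|_{L^2}$.

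Next I would pull the $(Q - \langle Q\rangle)$ out in sup-norm and control it via the fundamental theorem of calculus on $S^1$ in the same spirit as \eqref{v sup bound}. Specifically, inserting a factor of $e^{(V-\tau)-\rho/2}\cdot e^{-(V-\tau)+\rho/2}$ and applying Cauchy--Schwarz yields
\begin{align}
\|Q-\langle Q\rangle\|_{C^0}^2 \;\leq\; \left(\int_{S^1}|Q_\theta|\,d\theta\right)^{2}\;\lesssim\; E\cdot\int_{S^1} e^{-2(V-\tau)+\rho}\,d\theta,
\end{align}
where the $Q_\theta^2 e^{2(V-\tau)-\rho}$ term is again absorbed into $2E$. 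Combining with the pointwise estimate yields
\begin{align}
\bigl\|e^{V-2\tau+\rho/2}(Q-\langle Q\rangle)\bigr\|_{L^2}^2 \;\lesssim\; E\,\int_{S^1} e^{-2(V-\tau)+\rho}\,d\theta\;\int_{S^1} e^{2(V-\tau)-2\tau+\rho}\,d\theta.
\end{align}

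The main --- though routine --- step is to bound the product of the two $V$-weighted integrals. Both integrands share the $e^\rho$ density, giving two factors of $\Pi$; the troublesome $V$-dependent exponentials combine into $e^{2(\sup(V-\tau)-\inf(V-\tau))} = e^{2\,\mathrm{osc}(V)}$, and the oscillation of $V$ is controlled by $2\|V-\langle V\rangle\|_{C^0} \leq 2(\Pi E)^{1/2}$ thanks to \eqref{v sup bound}. Together with the explicit $e^{-2\tau}$ from the second integral, this produces
\begin{align}
\bigl\|e^{V-2\tau+\rho/2}(Q-\langle Q\rangle)\bigr\|_{L^2}^{2} \;\lesssim\; e^{-2\tau}\,e^{4(\Pi E)^{1/2}}\,\Pi^{2}\,E.
\end{align}
Feeding this back into the Cauchy--Schwarz step gives the claimed bound $e^{-\tau}e^{2(\Pi E)^{1/2}}\Pi E$. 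The only place where dropping the uniform bound on $\Pi$ (used in \cite{MR3312437}) matters is in this oscillation estimate, but since $\mathrm{osc}(V)$ is bounded by $(\Pi E)^{1/2}$ regardless of whether $\Pi$ itself is uniformly bounded, the same argument goes through with the exponential factor $e^{2(\Pi E)^{1/2}}$ replacing the previous constant.
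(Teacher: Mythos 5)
Your proposal is correct and follows essentially the same route as the paper: two applications of Cauchy--Schwarz against the $Q$-weights appearing in $E$, a fundamental-theorem bound on $\|Q-\langle Q\rangle\|_{C^0}$ via $\int_{S^1}|Q_\theta|\,d\theta$, and control of the stray $e^{\pm V}$ factors through $\|V-\langle V\rangle\|_{C^0}\lesssim(\Pi E)^{1/2}$, which is exactly the source of the $e^{2(\Pi E)^{1/2}}$ factor in the paper as well. The only difference is bookkeeping: the paper bounds $\|e^{V}(Q-\langle Q\rangle)\|_{C^0}$ directly by commuting out $e^{\langle V\rangle}$, while you keep $Q-\langle Q\rangle$ bare and absorb the $V$-exponentials via the oscillation of $V$ --- the two are equivalent.
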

\begin{proof}
Note that we have already bounded \(\| V - \langle V \rangle \|_{C_0}\) in equation \eqref{v sup bound}, and so we may commute out factors of \(e^V\) to obtain
\begin{align}
					\left \| e^{V}  \left( Q - \langle Q \rangle \right) \right \|_{C^0}
=				&	\left \| e^{V - \langle V \rangle +\langle V \rangle }  \left( Q - \langle Q \rangle \right) \right \|_{C^0}\\
=				&	e^{\|V - \langle V \rangle\|_{C^0}} e^{\langle V \rangle } \left \|    Q - \langle Q \rangle  \right \|_{C^0}\\
\leq			&	e^{2\|V - \langle V \rangle\|_{C^0}}  \left( \int_{S^1}  e^{2V}Q_\theta^2 e^{-\rho}  \, d\theta \right)^{1/2} \Pi^{1/2}\\
\leq			&	e^{2\|V - \langle V \rangle\|_{C^0}}  e^\tau \left(E \Pi\right)^{1/2}
\end{align}
via H\"older's inequality.
So we may compute, using the bound on \(\| V - \langle V \rangle \|_{C_0}\), H\"older's inequality, and the definition of \(E\)
\begin{align}
					\left |  e^{-2\tau} \int_{S^1} e^{2(V-\tau)} Q_\tau  \left( Q - \langle Q \rangle \right)e^\rho \, d\theta \right | 
\lesssim	&	e^{-4\tau} \left \| e^{V}  \left( Q - \langle Q \rangle \right) \right \|_{C^0} \left | \int_{S^1} e^{V} Q_\tau e^\rho \, d\theta \right |\\
\lesssim	&	e^{2\|V - \langle V \rangle\|_{C^0}}  e^{-3\tau} E^{1/2} \Pi^{1/2}\left | \int_{S^1} e^{V} Q_\tau e^\rho \, d\theta \right |\\
\leq			&	e^{2\|V - \langle V \rangle\|_{C^0}}  e^{-\tau} E \Pi\\
\lesssim	&	e^{-\tau}e^{2(\Pi E)^{1/2}}  \Pi E .
\qedhere
\end{align}
\end{proof}

We only need the \(Q\) correction for the following identity, which follows directly from the definitions of the conserved quantities \(A,B\):
\begin{align}
	\left \langle  e^{\rho} V_\tau   \right \rangle 
=	A+ B\langle Q \rangle +  \int_{S^1} e^{2(V-\tau)} Q_\tau  \left( Q - \langle Q \rangle \right)e^\rho \, d\theta.
\end{align}
For \(B_0\) solutions, however, we use the bound on the \(Q\) correction to obtain the following bound
\begin{align}
\left| \left \langle  e^{\rho-2\tau} V_\tau   \right \rangle \right| -e^{-2\tau}|A| \lesssim    e^{-\tau}e^{2(\Pi E)^{1/2}}  \Pi E  \label{correction bound 2}
\end{align}
which together with \eqref{correction bound 1} yields the desired estimate on the correction.
\begin{prop}
For any a non-Gowdy, \(B_0\) \(T^2\)-symmetric Einstein Flow,
\begin{align}
				\left| \Lambda  \right| -\frac{e^{-2\tau}}{2}|A| 
\lesssim  e^{-\tau}	\left( 1 +   e^{2(\Pi E)^{1/2}}   \right)\Pi E.\label{correction bound 3}
\end{align}
\end{prop}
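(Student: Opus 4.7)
The proposition follows almost directly by combining the two bounds \eqref{correction bound 1} and \eqref{correction bound 2} that have been assembled in the preceding lemmas, together with the assumption $B = 0$.

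The plan is to write $\Lambda$ as a telescoping sum and apply the triangle inequality in the form
\begin{align}
|\Lambda| \leq \left|\Lambda + \left\langle \tfrac{1}{2} e^{\rho-2\tau} V_\tau\right\rangle\right| + \tfrac{1}{2}\left|\left\langle e^{\rho-2\tau} V_\tau\right\rangle\right|.
\end{align}
The first summand is bounded by \eqref{correction bound 1}, which gives a term of size $e^{-\tau}\Pi E$. For the second summand, I would invoke the identity
\begin{align}
\left\langle e^{\rho}V_\tau\right\rangle = A + B\langle Q\rangle + \int_{S^1} e^{2(V-\tau)} Q_\tau(Q - \langle Q\rangle)e^\rho\,d\theta
\end{align}
that was derived from the definitions of $A$ and $B$. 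The crucial step is to use $B=0$ to drop the middle term, so that after multiplying by $e^{-2\tau}$ and applying the $Q$-correction bound from the previous lemma, one obtains precisely \eqref{correction bound 2}:
\begin{align}
\left|\left\langle e^{\rho-2\tau} V_\tau\right\rangle\right| \leq e^{-2\tau}|A| + C\,e^{-\tau} e^{2(\Pi E)^{1/2}} \Pi E.
\end{align}

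Substituting both bounds back yields
\begin{align}
|\Lambda| - \tfrac{1}{2}e^{-2\tau}|A| \leq C\,e^{-\tau}\Pi E + \tfrac{C}{2}\,e^{-\tau} e^{2(\Pi E)^{1/2}} \Pi E \lesssim e^{-\tau}\bigl(1 + e^{2(\Pi E)^{1/2}}\bigr)\Pi E,
\end{align}
which is the desired estimate. There is no real obstacle here; the content of the proposition is that the $B=0$ hypothesis is exactly what is needed to eliminate the only term not controlled by the energy and volume, namely the contribution of $B\langle Q\rangle$ (which could otherwise grow without bound as $\langle Q \rangle$ is not controlled by $E$). All other ingredients are already packaged in the two preceding lemmas, so the proof is a one-line combination once one writes $\Lambda$ in the correct form.
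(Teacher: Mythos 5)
Your proposal is correct and follows exactly the paper's route: the triangle inequality splits \(|\Lambda|\) into the piece controlled by \eqref{correction bound 1} and the mean \(\tfrac{1}{2}\left|\left\langle e^{\rho-2\tau}V_\tau\right\rangle\right|\), which is handled via the \(A,B\) identity with \(B=0\) and the \(Q\)-correction lemma, i.e.\ precisely \eqref{correction bound 2}. Your observation that \(B=0\) is needed exactly to kill the uncontrolled \(B\langle Q\rangle\) term matches the paper's stated motivation.
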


The correction \(\Lambda\) introduces significant new error terms after differentiation.
However, these terms have good bounds, and the modified energy \(E + \Lambda\) has significantly better properties upon comparison to \(E\) alone.
The evolution of this modified energy is the focus of the next chapter.

\section{The corrected energy}

\label{corrected chapter}
One would like to show that, up to error terms, \(\Pi\) and \(E\) satisfy an ODE.
While this is true asymptotically, it is more useful to compute with an energy which has been modified by the correction.

One computes that
\begin{align}
 \left( E + \Lambda \right)_\tau 
=&   
\int_{S^1}   -e^{\rho-2\tau}  \rho_\tau J -e^{\rho-2\tau} V_\tau^2- e^{2(V - \tau) - \rho }Q_\theta^2    \, d\theta
-2 \Lambda \\
&+ \frac{1}{2} e^{-2\tau} \int_{S^1} 
-e^{2\tau}  e^{-\rho} V_\theta^2  \, d\theta 
+ \frac{1}{2} e^{-2\tau} \int_{S^1} V_\tau^2e^\rho \, d\theta 
 \\
&+ \frac{1}{2} e^{-2\tau} \int_{S^1} e^{2(V - \tau) + \rho} \left ( Q_\tau^2 - e^{2(\tau - \rho)} Q_\theta^2 \right)  \left( V - \left \langle V \right \rangle -1\right)\, d\theta \\
&-\left \langle V_\tau \right \rangle \left \langle  \frac{1}{2}e^{\rho-2\tau} V_\tau   \right \rangle  \\
=&   -\left(1 +  \frac{\Pi_\tau}{\Pi}\right) \left( E + \Lambda \right)+\left( \frac{\Pi_\tau}{\Pi}E -\int_{S^1}   e^{\rho-2\tau}  \rho_\tau J \, d\theta   \right) -\left(1 -  \frac{\Pi_\tau}{\Pi}\right) \Lambda \\
&+ \frac{1}{2} e^{-2\tau} \int_{S^1} e^{2(V - \tau) + \rho} \left ( Q_\tau^2 - e^{2(\tau - \rho)} Q_\theta^2 \right)  \left( V - \left \langle V \right \rangle \right)\, d\theta  -\left \langle V_\tau \right \rangle \left \langle  \frac{1}{2}e^{\rho-2\tau} V_\tau   \right \rangle.
\end{align}
The leading term on the right leads us to the ansatz that \(\Pi\left(E+\Lambda\right)\) (and so \(\Pi E\)) should decay like \(e^{-\tau}\).
Accordingly, define the corrected, normalized energy \(H:= \Pi\left(E+\Lambda\right) \).
One computes that
\begin{align}
 \partial_\tau \left( e^\tau H \right) 
=& e^\tau H+e^\tau  \Pi_\tau \left( E + \Lambda \right) +e^\tau  \Pi \left( E + \Lambda \right)_\tau  \\
=&  e^\tau \Pi\left( \left( E + \Lambda \right) \left(1 +  \frac{\Pi_\tau}{\Pi}\right) +\left( E + \Lambda \right)_\tau  \right) \\
=& e^\tau \Pi\left[\left( \frac{\Pi_\tau}{\Pi}E -\int_{S^1}   e^{\rho-2\tau}  \rho_\tau J \, d\theta   \right)
-\left(1 -  \frac{\Pi_\tau}{\Pi}\right) \Lambda \right.\label{e tau h evol}\\
&\left.+ \frac{1}{2} e^{-2\tau} \int_{S^1} e^{2(V - \tau) + \rho} \left ( Q_\tau^2 - e^{2(\tau - \rho)} Q_\theta^2 \right)  \left( V - \left \langle V \right \rangle \right)\, d\theta -\left \langle V_\tau \right \rangle \left \langle  \frac{1}{2}e^{\rho-2\tau} V_\tau   \right \rangle \right] 
\end{align}

The ansatz in the local stability proof is that \(e^\tau H \) is of constant order.
The proof is via a bootstrap argument, where we bound all of the terms of \(\partial_\tau (e^\tau H) \) in terms of \(\Pi,E,H\) and \(\tau\).
The following Proposition deals with each of these error terms.

\begin{prop}\label{main estimates}
Consider the evolution of a \(B_0\) solution with initial data given at time \(\tau  = s_0\).
Let \(\rho_0 := \min_{\theta \in S^1} \rho(\theta, s_0) \).
The following estimates hold.
\begin{align}
\left|
\frac{\Pi_\tau}{\Pi} E - \int_{S^1}   e^{\rho-2\tau}  \rho_\tau J     \, d\theta 
\right|
\lesssim& E \int_{S^1} e^{\rho - \tau} \rho_\tau J \, d\theta,\label{j bound}\\
\left|
\left(1 -  \frac{\Pi_\tau}{\Pi}\right)\Lambda
\right|
\lesssim& |A|e^{-2\tau}\left(1 +  \frac{\Pi_\tau}{\Pi}\right) +e^{-\tau} \left(1+e^{2(\Pi E)^{1/2}}\right)\left( \Pi + \Pi_\tau \right) E,\label{lambda bound}\\
\left|\left \langle V_\tau \right \rangle \left \langle  \frac{1}{2}e^{\rho-2\tau} V_\tau   \right \rangle \right|
\lesssim& e^{-\rho_0/2} e^{-\tau} \left( |A| + e^{\tau}e^{2(\Pi E)^{1/2}}  \Pi E \right)E^{1/2},	\label{bad v}
\end{align}
and
\begin{align}
\left| e^{-2\tau}\int_{S^1} e^{2(V - \tau) + \rho} \left ( Q_\tau^2 - e^{2(\tau - \rho)} Q_\theta^2 \right)  \left( V - \left \langle V \right \rangle \right)\, d\theta \right|
\lesssim& \Pi^{1/2} E^{3/2} .\label{strange bound}
\end{align}
\end{prop}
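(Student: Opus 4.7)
The four estimates in Proposition \ref{main estimates} split by difficulty. Three of them, (\ref{lambda bound}), (\ref{bad v}), and (\ref{strange bound}), are essentially corollaries of bounds already established in Section \ref{correction chapter}, while (\ref{j bound}) rests on a new observation exploiting the momentum constraint (\ref{constraint}).

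For (\ref{strange bound}), I notice that the factor $e^{-2\tau}e^{2(V-\tau)+\rho}\bigl(Q_\tau^2 + e^{2(\tau - \rho)}Q_\theta^2\bigr)$ is exactly twice the $Q$-part of the integrand defining $E$; pulling $V - \langle V\rangle$ out in $C^0$ and applying (\ref{v sup bound}) yields the claim $\lesssim (\Pi E)^{1/2}\cdot E$. For (\ref{lambda bound}), I invoke (\ref{correction bound 3}) on $|\Lambda|$ and observe that $\rho_\tau = e^l > 0$ forces $\Pi_\tau/\Pi \geq 0$, so $|1 - \Pi_\tau/\Pi| \leq 1 + \Pi_\tau/\Pi$; distributing this factor across the two terms on the right of (\ref{correction bound 3}) produces (\ref{lambda bound}) directly. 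For (\ref{bad v}), (\ref{correction bound 2}) handles the second factor, and a Cauchy--Schwarz pairing of $V_\tau$ against the constant $1$ with weight $e^{\rho - 2\tau}$ gives
\begin{align}
|\langle V_\tau \rangle|^2 \leq \langle V_\tau^2 e^{\rho - 2\tau}\rangle \cdot \langle e^{2\tau - \rho}\rangle \lesssim E\cdot e^{2\tau}e^{-\rho_0},
\end{align}
where I use that $\rho_\tau > 0$ forces $\rho(\cdot,\tau) \geq \rho_0$ for all $\tau \geq s_0$, together with the normalization $\int_{S^1} d\theta = 1$. Multiplying the two factors gives the stated estimate.

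The interesting estimate is (\ref{j bound}). Rewriting
\begin{align}
\frac{\Pi_\tau}{\Pi}E - \int_{S^1} e^{\rho-2\tau}\rho_\tau J\, d\theta = \int_{S^1} \Big(\frac{\Pi_\tau}{\Pi} - \rho_\tau\Big) e^{\rho-2\tau} J\, d\theta
\end{align}
exhibits the quantity as the covariance of $\rho_\tau$ against the weight $e^{\rho - 2\tau}J$. The plan is to bound $\|\rho_\tau - \Pi_\tau/\Pi\|_{C^0}$ uniformly and pull it out of the integral. Since $\Pi_\tau/\Pi$ is an $e^\rho$-weighted mean of the positive continuous function $\rho_\tau$ on $S^1$, the intermediate value theorem produces some $\theta_0$ at which $\rho_\tau(\theta_0) = \Pi_\tau/\Pi$, and so it suffices to estimate $\int_{S^1}|\partial_\theta \rho_\tau|\, d\theta$. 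Here the momentum constraint enters: since $\partial_\theta \rho_\tau = \rho_\tau\, l_\theta$ with $l_\theta = V_\theta V_\tau + e^{2(V-\tau)}Q_\theta Q_\tau$, two weighted applications of AM--GM (with relative weight $e^{\tau - \rho}$) yield the pointwise inequality $|l_\theta| \lesssim e^{\rho-\tau}J$. Integrating gives $\|\rho_\tau - \Pi_\tau/\Pi\|_{C^0} \lesssim \int_{S^1} e^{\rho-\tau}\rho_\tau J\, d\theta$, and combining with $E = \int e^{\rho-2\tau}J\, d\theta$ yields (\ref{j bound}). The main obstacle is recognising the covariance structure and then identifying the momentum constraint as the right tool to control spatial variation of $\rho_\tau$; once these are in place the remaining manipulations are routine.
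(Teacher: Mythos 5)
Your proposal is correct and follows essentially the same route as the paper: the momentum constraint plus Young's inequality gives $|l_\theta| \leq e^{\rho-\tau}J$, hence control of the oscillation of $\rho_\tau = e^{l}$ for \eqref{j bound}, while \eqref{lambda bound}, \eqref{bad v}, and \eqref{strange bound} follow from \eqref{correction bound 3}, \eqref{correction bound 2} with Cauchy--Schwarz, and \eqref{v sup bound} exactly as you describe. The only cosmetic difference is that the paper bounds the covariance term in \eqref{j bound} via a symmetrized double integral over $S^1\times S^1$ rather than your intermediate-value-theorem choice of $\theta_0$; both reduce to the same estimate $\sup|\rho_\tau(a)-\rho_\tau(b)| \leq \int \rho_\tau |l_\theta|\,d\theta$.
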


\begin{proof}
For \eqref{j bound}, using Young's inequality, we note that
\begin{align}
|l_\theta| 
\leq	&	|V_\tau V_\theta | + | e^{V-\tau} Q_\tau e^{V-\tau } Q_\theta | \\
=		&	|e^{(\rho-\tau)/2}V_\tau e^{-(\rho-\tau)/2}V_\theta | + | e^{V-\tau} e^{(\rho-\tau)/2}Q_\tau e^{V-\tau } e^{-(\rho-\tau)/2}Q_\theta | \\
\leq	&	\frac{1}{2} \left[e^{\rho-\tau}V_\tau^2 + e^{\tau - \rho}V_\theta^2 +  e^{2(V-\tau)} e^{\rho-\tau}Q_\tau^2 +  e^{2(V-\tau) } e^{\tau - \rho}Q_\theta^2 \right]\\
=		&	e^{\rho - \tau} J. \label{l theta bound}
\end{align}
Thus we may use the Poincar\'e inequality to compute that
\begin{align}
						\left| \frac{\Pi_\tau}{\Pi}E - \int_{S^1}   e^{\rho-2\tau}  \rho_\tau J     \, d\theta \right| 
=					&	\Pi^{-1} \left| \Pi_\tau E - \Pi \int_{S^1}   e^{\rho-2\tau}  \rho_\tau J     \, d\theta \right|  \\
=					&	\Pi^{-1} \left| \int_{S^1} \int_{S^1} e^{\rho(\phi)} e^{\rho(\theta) - 2\tau} J(\theta) \left( \rho_\tau (\phi) - \rho_\tau(\theta) \right) \, d\phi d\theta\right|  \\
\leq				&	\Pi^{-1} \left| \int_{S^1} \int_{S^1} e^{\rho(\phi)} e^{\rho(\theta) - 2\tau} J(\theta) \sup_{a,b \in S^1} | \rho_\tau(a) - \rho_\tau (b) | \, d\phi d\theta\right|  \\
=					&	\Pi^{-1} \Pi   \int_{S^1}  e^{\rho(\theta) - 2\tau} J(\theta)  \,  d\theta \sup_{a,b \in S^1} | \rho_\tau(a) - \rho_\tau (b) | \\
\lesssim		&	E \int_{S^1} \rho_\tau |l_\theta| \,d\theta \\
\leq				&	E \int_{S^1} \rho_\tau  e^{\rho-\tau} J \,d\theta.
\end{align}

Inequality \eqref{lambda bound} follows directly from inequality \eqref{correction bound 3}.
To prove \eqref{strange bound}, we first commute out the $V$-mean.
\begin{align}
				&	\left| e^{-2\tau}\int_{S^1} e^{2(V - \tau) + \rho} \left ( Q_\tau^2 - e^{2(\tau - \rho)} Q_\theta^2 \right)  \left( V - \left \langle V \right \rangle \right)\, d\theta \right|\\
\leq			&	e^{-2\tau}\|V - \langle V \rangle \|_{C^0} \int_{S^1} e^{2(V - \tau) + \rho} \left | Q_\tau^2 - e^{2(\tau - \rho)} Q_\theta^2 \right|\, d\theta\\
\lesssim			&	e^{-2\tau}( \Pi E )^{1/2} \int_{S^1} e^\rho e^{2(V - \tau) } \left( Q_\tau^2 + e^{2(\tau - \rho)} Q_\theta^2 \right)\, d\theta\\
\lesssim	&( \Pi E )^{1/2} \int_{S^1} e^{\rho - 2\tau} J\, d\theta\\
=				&\Pi^{1/2} E^{3/2}.
\end{align}

Lastly, for \eqref{bad v} recall that \(\rho\) is increasing and compute that
\begin{align}
					\left|  \left \langle V_\tau \right \rangle  \right|
\leq				\left( \int_{S^1} V_\tau^2 e^\rho \, d\theta \right)^{1/2}\left( \int_{S^1} e^{-\rho} \, d\theta \right)^{1/2} 
\leq				e^{-\rho_0/2} e^\tau \left(\int_{S^1} V_\tau^2 e^{\rho-2\tau} \, d\theta\right)^{1/2} 
\lesssim		e^{-\rho_0/2} e^\tau E^{1/2}
\end{align}
and use \eqref{correction bound 2}.
This completes the proof.
\end{proof}

Now that we have an energy satisfying a good differential equation with good bounds on the error, we proceed to the linearization.

\section{Linearization}

\label{ode chapter}

In \cite{MR3513138}, the authors present an argument that certain asymptotic rates of \(\Pi, E\) should be preferred, based on the assumption that \(e^\tau H\) should be of constant order.
In this section we briefly summarize that argument as it appears in our context.
\begin{defn}
Let \(Y := \left \langle  e^{l + \rho + 2\tau} \right \rangle\).
\end{defn}
This quantity has been previously considered; see \cite{MR1858721} where (modulo factors of \(e^\tau\)) it is called the ``twist potential.''

Note that we have defined \(Y\) so that \(Y_\tau = \langle e^{l + \rho + 2\tau}(l_\tau + \rho_\tau +2) \rangle = \langle e^{l + \rho + 2\tau} J \rangle\).
We want to form a system of ordinary differential equations from the means, however.
So we distribute the integral over the product, introducing the error term \(\Omega\).
One computes
\begin{align}
\Pi_\tau =& e^{-2\tau} Y \label{pi evol}\\
Y_\tau  =& e^{2\tau} E Y \Pi^{-1} + \Omega \label{y evol}
\end{align}
where
\begin{align}
\Omega:= \left \langle e^{l+ \rho + 2\tau}  J  \right \rangle - e^{2\tau} E Y \Pi^{-1}
\end{align}
is an error term satisfying
\begin{align}
\left | \Omega \right | \leq e^{4\tau} E \left \langle e^{l+ \rho - \tau}  J  \right \rangle = e^\tau E Y_\tau.\label{ first omega bound}
\end{align}
Note that our quantity $E$ contains the terms $Q_\theta$ and $Q_\tau$, and so is not identical to the energy in \cite{MR3513138}.
Nonetheless, the quantities $\Pi ,Y,E$ satisfy similar relations to the relations that LeFloch and Smulevici's quantities do.
Normalizing, we compute that
\begin{align}
		\partial_\tau \left( e^{-\tau} H^{-1/2} \Pi\right)
=	&	e^{-\tau} H^{-1/2} \Pi_\tau - e^{-\tau} H^{-1/2} \Pi -\frac{1}{2}e^{-\tau} H^{-1/2} \Pi \frac{H_\tau}{H}\\
=	&	\left(e^{-3\tau} H^{-1/2}  Y\right) + \left( e^{-\tau} H^{-1/2} \Pi\right) \left( -1 -\frac{1}{2} \frac{H_\tau}{H}\right) \\
		\partial_\tau \left( e^{-3\tau} H^{-1/2}  Y\right)
=	&	e^{-3\tau} H^{-1/2}  Y_\tau - 3e^{-3\tau} H^{-1/2} Y -\frac{1}{2}e^{-3\tau} H^{-1/2} Y \frac{H_\tau}{H}\\
=	&	e^{-3\tau} H^{-1/2}  \left(e^{2\tau} E Y \Pi^{-1} + \Omega\right) + \left( e^{-3\tau} H^{-1/2} Y\right) \left( -3 -\frac{1}{2} \frac{H_\tau}{H}\right) \\
=	&	 \frac{\left(e^{-3\tau} H^{-1/2}Y\right)}{\Pi^2}e^{2\tau} \Pi E    + \left( e^{-3\tau} H^{-1/2} Y\right) \left( -3 -\frac{1}{2} \frac{H_\tau}{H}\right) +e^{-3\tau} H^{-1/2}\Omega \\
=	&	 \frac{\left(e^{-3\tau} H^{-1/2}Y\right)}{\left( e^{-\tau} H^{-1/2}\Pi \right)^2} \frac{\Pi E}{H}    + \left( e^{-3\tau} H^{-1/2} Y\right) \left( -3 -\frac{1}{2} \frac{H_\tau}{H}\right) +e^{-3\tau} H^{-1/2}\Omega \\
=	&	   \frac{\left(e^{-3\tau} H^{-1/2}Y\right)}{\left( e^{-\tau} H^{-1/2}\Pi \right)^2}   + \left( e^{-3\tau} H^{-1/2} Y\right) \left( -3 -\frac{1}{2} \frac{H_\tau}{H}\right) +e^{-3\tau} H^{-1/2}\Omega \\
	&	+\frac{\left(e^{-3\tau} H^{-1/2}Y\right)}{\left( e^{-\tau} H^{-1/2}\Pi \right)^2} \left(\frac{\Pi E}{H} -1\right).
\end{align}
We insert our ans\"atze that \(\frac{H_\tau}{H} \to -1\), \(e^{-3\tau} H^{-1/2}\Omega \to 0\), and \( \left(\frac{\Pi E}{H} -1\right) \to 0\), to obtain the ODE
\begin{align}
		\partial_\tau {\overline c}
=	&	{\overline d} + {\overline c} \left(  -\frac{1}{2} \right) \\
		\partial_\tau {\overline d}
=	&	   \frac{{\overline d}}{{\overline c}^2}   + {\overline d} \left(  -\frac{5}{2} \right) 
\end{align}
which has a fixed point at
\begin{align}
{\overline c} = \frac{2}{\sqrt{10}} , \quad {\overline d} = \frac{1}{\sqrt{10}}.
\end{align}
So we conjecture that the quantities
\begin{align}
c:=	\frac{\Pi}{e^\tau \sqrt{H}} - \frac{2}{\sqrt{10}}, \quad 
d:=	\frac{Y}{e^{3\tau} \sqrt{H}} - \frac{1}{\sqrt{10}}
\end{align}
decay and compute the evolution of these quantities using \eqref{pi evol} and \eqref{y evol}.
We find that
\begin{align}
\partial_\tau \left( \begin{array}{l} c \\ d \end{array} \right) 
=&
\left( \begin{array}{ll} -1/2 & 1\\ -5/2 & 0 \end{array} \right)\left( \begin{array}{l} c \\ d \end{array} \right) 
-\frac{1}{2}\partial_\tau \log\left( e^\tau H\right)\left( \begin{array}{l} c \\ d \end{array} \right) \\
&-\frac{1}{2} \partial_\tau \log\left( e^\tau H\right)\left( \begin{array}{l}\frac{2}{\sqrt {10}} \\ \frac{1}{\sqrt {10}} \end{array} \right)
+\left( \begin{array}{l} 0 \\ \frac{f(d,c)}{\left(c+\frac{2}{\sqrt{10}} \right)^2 }  +\left( \displaystyle\frac{\Pi E}{H}-1\right)\frac{d+\frac{1}{\sqrt{10}}}{\left(c+ \frac{2}{\sqrt{10}}\right)^2}+ \displaystyle\frac{\Omega}{e^{3\tau} H^{1/2}} \end{array} \right) 
\end{align}
where \(f(c,d)=\frac{10 c-10 d+3 \sqrt{10}}{4} c^2  - \sqrt{10} cd\) has vanishing linear part.
Let
\begin{align}
\widetilde \Omega:= -\frac{1}{2} \partial_\tau \log\left( e^\tau H\right)\left( \begin{array}{l}\frac{2}{\sqrt {10}} \\ \frac{1}{\sqrt {10}} \end{array} \right)
+\left( \begin{array}{l} 0 \\ \frac{f(d,c)}{\left(c+\frac{2}{\sqrt{10}} \right)^2 }  +\left( \displaystyle\frac{\Pi E}{H}-1\right)\frac{d+\frac{1}{\sqrt{10}}}{\left(c+ \frac{2}{\sqrt{10}}\right)^2}+ \displaystyle\frac{\Omega}{e^{3\tau} H^{1/2}} \end{array} \right)
\end{align}
denote the error term of this approximation.
In the end, the following estimate is obtained (cf. \cite{MR3513138}, Proposition 5.1).
\begin{prop}\label{cd bound}
Consider the evolution of a \(B_0\) \(T^2\)-symmetric solution.
Provided the corrected energy $H$ is positive, one has for \(s\geq s_0\)
\begin{align}
\left|\left( \begin{array}{l} c \\ d \end{array} \right)\right|(s) \lesssim & e^{(s_0-s)/4}\left( \frac{ e^{s_0}H(s_0) }{e^s H(s)}\right)^{1/2}\left|\left( \begin{array}{l} c \\ d \end{array} \right)\right|(s_0) +\int^s_{s_0}     e^{(\tau-s)/4}\left( \frac{e^\tau H(\tau) }{e^s H(s)}\right)^{1/2}|\omega(\tau)| \, d\tau,
\end{align}
where 
\begin{align}
|\omega| \lesssim \left| \widetilde \Omega \right|.
\end{align}
\end{prop}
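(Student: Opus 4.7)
The plan is to treat the displayed evolution equation for $\vec v := (c,d)^T$ as a linear inhomogeneous system: the right-hand side consists of the constant matrix $M := \left(\begin{smallmatrix} -1/2 & 1 \\ -5/2 & 0 \end{smallmatrix}\right)$ applied to $\vec v$, a scalar multiple $\phi(\tau)\vec v$ with $\phi := -\tfrac{1}{2}\partial_\tau \log(e^\tau H)$, and the forcing $\widetilde \Omega$ (into which the remaining nonlinear and approximation contributions have already been packaged). Since $H > 0$ is assumed, I would first eliminate the scalar perturbation by setting $\mu(\tau) := (e^\tau H(\tau))^{1/2}$, which satisfies $\mu_\tau/\mu = -\phi$. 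Writing $\vec w := \mu \vec v$, a direct computation cancels the $\phi\vec v$ term and yields the constant-coefficient system
\begin{align*}
\partial_\tau \vec w = M \vec w + \mu\, \widetilde \Omega.
\end{align*}

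Next I would compute the spectrum of $M$: its characteristic polynomial is $\lambda^2 + \tfrac{1}{2}\lambda + \tfrac{5}{2}$, with simple roots $-\tfrac{1}{4} \pm i\tfrac{\sqrt{39}}{4}$. Simplicity of the eigenvalues yields the uniform bound $\|e^{Ms}\|_{op} \lesssim e^{-s/4}$ for $s \geq 0$, with no polynomial prefactor. Duhamel's formula then gives
\begin{align*}
\vec w(s) = e^{M(s-s_0)}\vec w(s_0) + \int_{s_0}^s e^{M(s-\tau)} \mu(\tau) \widetilde \Omega(\tau)\, d\tau,
\end{align*}
so that $|\vec w(s)| \lesssim e^{-(s-s_0)/4} |\vec w(s_0)| + \int_{s_0}^s e^{-(s-\tau)/4}\mu(\tau) |\widetilde \Omega(\tau)|\, d\tau$.

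Finally I would divide by $\mu(s) = (e^s H(s))^{1/2}$; the ratio $\mu(\tau)/\mu(s) = (e^\tau H(\tau)/e^s H(s))^{1/2}$ generates exactly the weighting factors appearing in the statement, and taking $\omega := \widetilde \Omega$ makes the bound $|\omega| \lesssim |\widetilde \Omega|$ trivial. The only delicate point is the spectral estimate $\|e^{Ms}\|_{op} \lesssim e^{-s/4}$: a repeated eigenvalue would have forced a polynomial prefactor interfering with the exponential decay rate advertised in the statement, but the complex-conjugate pair here is automatically simple, so no such loss occurs. I do not anticipate any serious obstacle beyond this bookkeeping.
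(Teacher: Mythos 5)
Your argument is correct and is exactly the intended one: the paper itself gives no proof here, deferring to LeFloch--Smulevici (Proposition 5.1), and their argument is the same Duhamel computation you describe --- the factor \((e^\tau H)^{1/2}\) is precisely the integrating factor that absorbs the scalar term \(-\tfrac12 \partial_\tau\log(e^\tau H)\,(c,d)^T\), and the complex pair \(-\tfrac14\pm i\tfrac{\sqrt{39}}{4}\) of simple eigenvalues of the constant matrix gives \(\|e^{Ms}\|\lesssim e^{-s/4}\) with no polynomial loss. Taking \(\omega=\widetilde\Omega\) then reproduces the stated inequality verbatim, so nothing is missing.
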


Quickly note a bound on one of the terms appearing in \(\widetilde \Omega\).
\begin{lem}
Consider the evolution of a \(B_0\) \(T^2\)-symmetric solution.
The following estimate holds.
\begin{align}
\left|e^{-3\tau} H^{-1/2} \Omega
\right|
\lesssim& e^{-2\tau} |H|^{-1/2} E Y_\tau.
\label{Omega bound}
\end{align}
\end{lem}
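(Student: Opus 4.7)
The plan is to observe that this lemma is essentially an immediate corollary of the bound on $\Omega$ already derived in equation \eqref{ first omega bound}. Recall that bound reads
\begin{align}
|\Omega| \;\leq\; e^{4\tau} E \left\langle e^{l+\rho-\tau} J \right\rangle \;=\; e^{\tau} E\, Y_\tau,
\end{align}
where the first equality used $|l_\theta| \leq e^{\rho-\tau} J$ (established in \eqref{l theta bound}) together with the Poincar\'e-type trick of distributing the product under the mean to rewrite $\Omega = \langle e^{l+\rho+2\tau} J\rangle - e^{2\tau} E Y \Pi^{-1}$ as a double integral of $J(\theta)(\,\text{oscillation of } l)$.

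Given this, the claimed estimate follows by multiplying both sides of $|\Omega| \leq e^{\tau} E Y_\tau$ by the nonnegative factor $e^{-3\tau}|H|^{-1/2}$:
\begin{align}
\left| e^{-3\tau} H^{-1/2} \Omega \right| \;=\; e^{-3\tau} |H|^{-1/2} |\Omega| \;\leq\; e^{-3\tau} |H|^{-1/2}\cdot e^{\tau} E Y_\tau \;=\; e^{-2\tau} |H|^{-1/2} E Y_\tau,
\end{align}
which is exactly the desired bound. The only implicit assumption is that $H \neq 0$ along the evolution, which matches the hypothesis in Proposition \ref{cd bound} that the corrected energy is positive.

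There is no genuine obstacle here; the lemma is a repackaging of \eqref{ first omega bound} into the normalisation $e^{-3\tau}H^{-1/2}\Omega$ that appears naturally in the definition of $\widetilde\Omega$ from the previous section. The purpose of stating it separately is simply to make the subsequent bootstrap on $(c,d)$ via Proposition \ref{cd bound} cleaner, since this is the form in which $\Omega$ enters the error term $\widetilde\Omega$.
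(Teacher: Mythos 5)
Your proposal is correct and matches the paper's intended argument: the paper itself only remarks that the lemma ``proceeds in the same manner as the proof of inequality \eqref{j bound}'', i.e.\ the double-integral/Poincar\'e oscillation argument that underlies \eqref{ first omega bound}, and once $|\Omega|\leq e^{\tau}EY_\tau$ is in hand the lemma is exactly the multiplication by $e^{-3\tau}|H|^{-1/2}$ that you perform. The only minor imprecision is that the relevant oscillation is that of $\rho_\tau=e^{l}$ rather than of $l$ itself (one uses $|\partial_\theta e^{l}|=e^{l}|l_\theta|\leq e^{l+\rho-\tau}J$), but this does not affect the argument.
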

The proof of this lemma proceeds in the same manner as the proof of inequality \eqref{j bound}.
The remaining three terms in \(\widetilde \Omega\) are estimated directly.
In the next section, we perform a bootstrap argument to bound these errors, provided the initial data is sufficiently close to the asymptotic behavior.

\section{The Bootstrap}

\label{bootstrap chapter}

The technique of proof follows \cite{MR3513138}.
The idea is to impose some smallness assumptions on the means of the energy, the \(S^1\) volume, and their derivatives.
We then use a bootstrap argument to show that these assumptions are improved.
The reason for obtaining the estimates of Lemma \ref{main estimates} is to bound the evolution of the corrected energy \(H\).
Let us discuss how that proof goes.
We have computed \(\partial_\tau \left( e^\tau H \right) \) in equation \eqref{e tau h evol}.
Note that we may bound the right side of that equation by an expression of the form
\begin{align}
\left| \partial_\tau \left( e^\tau H \right)  \right|
\lesssim& e^\tau \Pi E F + \widetilde F
= e^\tau H \frac{\Pi E}{H} F+ \widetilde F
\end{align}
where, using the results of Lemma \ref{main estimates} we can write
\begin{align}
F 
:
=&  \int_{S^1} e^{\rho - \tau} \rho_\tau J \, d\theta +e^{-\tau} \left(1+e^{2(\Pi E)^{1/2}}\right)\left( \Pi + \Pi_\tau \right)  + (\Pi E)^{1/2} + e^{-\rho(s_0)/2}e^{2(\Pi E)^{1/2}}  \Pi  E^{1/2}  \label{f def}
\end{align}
and
\begin{align}
\widetilde F 
:
=  |A|\Pi\left(e^{-\tau}\left(1 +  \frac{\Pi_\tau}{\Pi}\right)  + e^{-\rho_0/2}  E^{1/2} \right). \label{f tilde def}
\end{align}
Note that \(F\) and \(\widetilde F\) are nonnegative.
We are then concerned with the quantities
\begin{align}
\int_{s_0}^\infty F(\tau) \, d\tau, \quad \text{and} \quad \int_{s_0}^\infty \widetilde F(\tau) \, d\tau
\end{align}
which bound the evolution of \(e^\tau H\) in the bootstrap proof.

First, however, we need the following version of Gr\"onwall's Lemma, the proof of which is straightforward.

\begin{lem}[Gr\"onwall's Inequality]\label{gronwall}
Let \(\alpha,\beta,f\) be nonnegative smooth functions on the interval \([s_0,s]\).
Suppose \(f\) satisfies the differential inequality
\begin{align}
|f'| \leq \alpha + \beta f.\label{assum} 
\end{align}
Then  
\begin{align}
\left|f(s)-f(s_0) \right| \leq -f(s_0)+\left(f(s_0) +\int_{s_0}^s \alpha(t) \, dt \right)\exp\left( \int_{s_0}^s \beta(t) \, dt \right). 
\end{align}
\end{lem}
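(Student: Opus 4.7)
The plan is a standard two-sided Gr\"onwall argument. The hypothesis $|f'|\leq \alpha+\beta f$ splits into the two inequalities $f'\leq \alpha+\beta f$ and $f'\geq -\alpha-\beta f$, which I would treat separately to bound $f(s)$ from above and from below, then combine the two one-sided bounds into the symmetric estimate on $|f(s)-f(s_0)|$.

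For the upper bound I would use the integrating factor $\exp(-\!\int_{s_0}^t\beta)$. Setting $g(t):=f(t)\exp\bigl(-\!\int_{s_0}^t\beta(u)\,du\bigr)$, the inequality $f'\leq \alpha+\beta f$ combined with $\beta\geq 0$ yields
\[
g'(t) = \bigl(f'(t)-\beta(t)f(t)\bigr)\exp\Bigl(-\!\int_{s_0}^t\beta\Bigr) \;\leq\; \alpha(t)\exp\Bigl(-\!\int_{s_0}^t\beta\Bigr) \;\leq\; \alpha(t).
\]
Integrating from $s_0$ to $s$ and then multiplying through by $\exp\bigl(\int_{s_0}^s\beta\bigr)$ gives
\[
f(s) \;\leq\; \Bigl(f(s_0)+\int_{s_0}^s\alpha(t)\,dt\Bigr)\exp\Bigl(\int_{s_0}^s\beta(t)\,dt\Bigr),
\]
and subtracting $f(s_0)$ produces exactly the claimed bound on $f(s)-f(s_0)$.

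For the lower bound I would reverse the integrating factor: setting $h(t):=f(t)\exp\bigl(\int_{s_0}^t\beta\bigr)$, the inequality $f'\geq -\alpha-\beta f$ gives $h'(t)\geq -\alpha(t)\exp\bigl(\int_{s_0}^t\beta\bigr)$. Integrating from $s_0$ to $s$ and bounding the exponential in the integrand by its maximum $\exp\bigl(\int_{s_0}^s\beta\bigr)$, I obtain
\[
f(s) \;\geq\; f(s_0)\exp\Bigl(-\!\int_{s_0}^s\beta\Bigr) - \int_{s_0}^s\alpha(t)\,dt,
\]
so that $f(s_0)-f(s)\leq f(s_0)\bigl(1-\exp(-\!\int_{s_0}^s\beta)\bigr)+\int_{s_0}^s\alpha$. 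Comparing this with the right-hand side of the claim, the required inequality rearranges to
\[
f(s_0)\Bigl(2-\exp\bigl(-\!\int_{s_0}^s\beta\bigr)-\exp\bigl(\int_{s_0}^s\beta\bigr)\Bigr) \;\leq\; \Bigl(\int_{s_0}^s\alpha\Bigr)\Bigl(\exp\bigl(\int_{s_0}^s\beta\bigr)-1\Bigr),
\]
whose left side is nonpositive by $e^x+e^{-x}\geq 2$, while the right side is manifestly nonnegative since $\alpha,\beta\geq 0$.

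There is not really a hard step here, which is no doubt why the authors call the proof ``straightforward.'' The only minor subtlety worth flagging is that the lower bound on $f(s)-f(s_0)$ does not come from applying Gr\"onwall directly to $-f$ (which need not be nonnegative, so the hypothesis of the lemma would not apply to it), but must instead be extracted from the one-sided lower bound on $f(s)$ together with the elementary convexity inequality $e^x+e^{-x}\geq 2$.
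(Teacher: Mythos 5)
Your proof is correct; note that the paper itself offers no proof (it declares the lemma ``straightforward''), so there is nothing internal to compare against. Each step of your two-sided argument checks out: the upper one-sided bound via the integrating factor $\exp(-\int_{s_0}^t\beta)$ is the standard Gr\"onwall estimate, the lower one-sided bound $f(s)\geq f(s_0)\exp(-\int_{s_0}^s\beta)-\int_{s_0}^s\alpha$ is derived correctly, and the final comparison does reduce to $f(s_0)\bigl(2-e^{-B}-e^{B}\bigr)\leq A\,(e^{B}-1)$ with $A=\int\alpha\geq 0$, $B=\int\beta\geq 0$, which holds by $e^{B}+e^{-B}\geq 2$. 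Your closing remark that one cannot simply apply the lemma to $-f$ is also well taken. That said, the detour through the convexity inequality is avoidable, and the shape of the stated right-hand side suggests the intended one-line route: from $|f'|\leq\alpha+\beta f$ one gets
\begin{align}
|f(s)-f(s_0)|\;\leq\;\int_{s_0}^s|f'|\;\leq\;F(s)-f(s_0),\qquad F(t):=f(s_0)+\int_{s_0}^t(\alpha+\beta f),
\end{align}
and since $f\leq F$ pointwise one has $F'\leq\alpha+\beta F$, so the ordinary one-sided Gr\"onwall bound applied to $F$ gives $F(s)\leq\bigl(f(s_0)+\int_{s_0}^s\alpha\bigr)\exp\bigl(\int_{s_0}^s\beta\bigr)$, which is exactly the claimed estimate. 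This variant handles both signs of $f(s)-f(s_0)$ simultaneously and makes the asymmetric-looking right-hand side transparent, but your version is complete and correct as written.
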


\begin{lem}\label{small lemma}
There exist constants $\epsilon,C_1 >0$, \(M > 1\), a time \(s_0 > 0\) depending on \(\epsilon\), and an open set of \(B_0\) Einstein Flows satisfying the following conditions at time \(\tau = s_0\):
\begin{align}
|A| <& 1 \label{a bound}\\
\rho_0:= \inf_{S^1} \rho >& 0 \label{rho0 bound}\\
|c| < & \epsilon \\
|d| < & \epsilon \\
\left | \frac{\Pi E}{H}  -1 \right| < & 1 ,\\
\frac{1}{2} \epsilon^{-1} <e^{s_0} <&2 \epsilon^{-1}  \\
&e^{s_0} H(s_0)+C_1 \epsilon^{1/2} <  M\epsilon e^{s_0} \label{eng upper bound}\\
0<\frac{1}{M} < &e^{s_0} H(s_0)-C_1 \epsilon^{1/2} . \label{eng lower bound}
\end{align}
Furthermore, for all \(\tau \in [s_0 , \infty)\), the following weaker estimates hold:
\begin{align}
|c| < & \epsilon^{1/4} \label{boot first}\\
|d| < & \epsilon^{1/4} \\
\left | \frac{\Pi E}{H}  -1 \right| < & 3\label{boot correction} \\
\frac{1}{2}\left(e^{s_0} H(s_0)-C_1 \epsilon^{1/2} \right)  < e^{\tau} H(\tau) < & 2\left(e^{s_0} H(s_0)+C_1 \epsilon^{1/2} \right)  \label{boot last}
\end{align}
\end{lem}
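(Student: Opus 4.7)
The lemma splits into two parts: existence of the claimed open set of initial data at $\tau = s_0$, and forward propagation of the weaker bounds \eqref{boot first}--\eqref{boot last} by a bootstrap argument. For the first part, note that \eqref{a bound}--\eqref{eng lower bound} impose the single linear constraint $B = 0$ together with a finite collection of strict inequalities on continuous functionals of the Cauchy data. It therefore suffices to exhibit one compatible flow. Starting from a polarised $T^2$-symmetric solution of the type constructed in \cite{MR3513138}, whose expanding-direction asymptotics give $\Pi \sim e^{\tau/2}$, $Y \sim e^{5\tau/2}$, and $e^\tau H$ of constant order, one restricts to a time slice $s_0 \sim \log(1/\epsilon)$ chosen large enough that $|c|, |d| < \epsilon$ and $|\Pi E/H - 1| < 1$. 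Polarised solutions automatically satisfy $B = 0$, and $|A| < 1$ is an open condition; openness of the remaining inequalities then provides the required open neighborhood, with $M$ taken large and $C_1 \epsilon^{1/2}$ much smaller than $H(s_0) e^{s_0}$ to secure \eqref{eng upper bound}--\eqref{eng lower bound}.

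For the bootstrap, let $S$ denote the supremum of $s \geq s_0$ such that \eqref{boot first}--\eqref{boot last} hold on $[s_0, s]$. By continuity and the strict gap between the initial and bootstrap bounds, $S > s_0$; the goal is $S = \infty$. Assume for contradiction that $S < \infty$. The heart of the proof is to improve \eqref{boot last} on $[s_0, S]$ using only the weak bounds. From \eqref{e tau h evol} and Proposition \ref{main estimates},
\[
\left| \partial_\tau\left( e^\tau H \right) \right| \lesssim e^\tau H \cdot \frac{\Pi E}{H} \cdot F + \widetilde F
\]
with $F, \widetilde F$ as in \eqref{f def}, \eqref{f tilde def}. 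Under the bootstrap, the scalings $\Pi \sim e^{\tau/2}$ and $E \sim e^{-3\tau/2}$ hold, whence each summand of $F$ and $\widetilde F$ decays exponentially in $\tau$; the $|A|$-dependent piece $\widetilde F$ decays like $|A|(e^{-\tau/2} + e^{-\rho_0/2} e^{-\tau/4})$, which is integrable because $\rho_0 > 0$ is preserved by \eqref{rho-evol}. Hence $\int_{s_0}^\infty F$ and $\int_{s_0}^\infty \widetilde F$ are each bounded by a positive power of $\epsilon$, and Lemma \ref{gronwall} yields $|e^\tau H(\tau) - e^{s_0} H(s_0)| \lesssim \epsilon^{1/2}$, strictly improving \eqref{boot last} in view of the slack provided by $C_1$ and the factor $2$.

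With $e^\tau H$ now two-sided controlled, the remaining improvements follow in order. Since $E + \Lambda = H/\Pi$, the estimate \eqref{correction bound 3} gives $|\Pi E/H - 1| = |\Pi \Lambda/H| \lesssim \epsilon^{1/2}$, strictly improving \eqref{boot correction}. For $|c|$ and $|d|$ I would apply Proposition \ref{cd bound}: the prefactor $\bigl(e^{s_0} H(s_0)/e^s H(s)\bigr)^{1/2}$ is uniformly bounded by the improved \eqref{boot last}, while $|\omega| \lesssim |\widetilde\Omega|$ is handled component by component -- the $\partial_\tau \log(e^\tau H)$ piece is integrable by the same estimates as for $H$; the nonlinear $f(c,d)$ piece is quadratic and absorbs $\epsilon^{1/4}$-smallness; the $\Pi E/H - 1$ factor has just been improved; and the $\Omega$-piece is controlled by \eqref{Omega bound}. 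Together these yield $|c|, |d| \lesssim \epsilon^{1/2} \ll \epsilon^{1/4}$, contradicting the finiteness of $S$.

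The principal obstacle is the bookkeeping around the conserved term $|A|$ and the mutual dependence of the bootstrap bounds. Since $A$ is conserved and possibly of order one, the error $\widetilde F$ does not automatically vanish at large $\tau$, and its integrability hinges on preservation of the strict lower bound $\rho_0 > 0$ under $\rho_\tau = e^l > 0$, together with the subexponential growth of $\Pi$ captured by the $c$-bootstrap. Moreover, the improved $H$-bound is needed to improve the $c, d$ and $\Pi E/H$ bounds, which in turn justify the scalings entering the $H$-estimate; this circularity is broken by carrying out the improvements in the order above and by exploiting the strict gap between initial bounds (of size $\epsilon$ or $\epsilon^{1/2}$) and bootstrap bounds (of size $\epsilon^{1/4}$ or $O(1)$), which absorbs all constants introduced along the way.
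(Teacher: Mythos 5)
Your proposal is correct and follows essentially the same route as the paper: an open/closed bootstrap in which the two-sided bound on \(e^\tau H\) is improved via Gr\"onwall using the integrability of \(F\) and \(\widetilde F\), the bound on \(\left|\Pi E/H - 1\right|\) is improved via the correction estimate \eqref{correction bound 3}, and the bounds on \(c,d\) are improved by feeding the four summands of \(\widetilde\Omega\) into Proposition \ref{cd bound} (the paper improves \(\Pi E/H\) before \(H\), but as you note each improvement uses only the weak bootstrap hypotheses, so the order is immaterial). You additionally sketch the construction of the open set of admissible initial data from late time slices of the LeFloch--Smulevici polarised solutions, a point the paper's proof leaves implicit.
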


\begin{rem}
Assumptions \eqref{a bound} and \eqref{rho0 bound} are not strictly needed.
One could omit these assumptions and instead gain terms involving \(A, \rho_0\) in inequalities \eqref{eng upper bound}, \eqref{eng lower bound}, and \eqref{boot last}.
We have added these assumptions just to simplify the notation.
\end{rem}

The technique of proof is a straightforward ``open closed'' argument:
\begin{enumerate}
\item
Suppose estimates \eqref{boot first} to \eqref{boot last} are satisfied for \(\tau \in [s_0,s)\).
\item
We improve each of the five estimates \eqref{boot first} to \eqref{boot last} at \(\tau = s\) by choosing \(\epsilon\) small.
\end{enumerate}

\begin{proof}
\begin{description}
\item[Initial Estimates]
From assumptions \eqref{boot first} to \eqref{boot last}, we have that
\begin{align} 
e^{-\tau} \lesssim H \lesssim \epsilon e^{s_0-\tau}, 
\end{align}
and
\begin{align} 
 \left| \frac{\Pi}{e^\tau \sqrt{H}} - \frac{2}{\sqrt{10}} \right|  = \left| c \right| < \epsilon^{1/4}, \quad \left| \frac{Y}{e^{3\tau} \sqrt{H}} - \frac{1}{\sqrt{10}} \right| = \left| d \right| < \epsilon^{1/4}
\end{align}
so
\begin{align}
				\Pi
\lesssim		&	\left( \frac{2}{\sqrt{10} } + \epsilon^{1/4} \right)  e^{\tau} H^{1/2}
\leq			\left( \frac{2}{\sqrt{10} } + \epsilon^{1/4} \right) \epsilon^{1/2} e^{(s_0+\tau)/2}
\lesssim		\epsilon^{1/2} e^{s_0/2}e^{\tau/2}, \label{pi estimate}\\
				e^{2\tau}\Pi_\tau
=				Y
\lesssim		&	\left( \frac{1}{\sqrt{10} } + \epsilon^{1/4} \right) e^{3\tau} H^{1/2}
\lesssim		\left( \frac{1}{\sqrt{10} } + \epsilon^{1/4} \right) \epsilon^{1/2} e^{(5\tau+s_0)/2}
\lesssim		\epsilon^{1/2} e^{s_0/2}e^{5\tau/2}.\label{y estimate}
\end{align}
Note that \eqref{boot correction} implies that $\Pi E \lesssim  H $ on this interval, which implies that
\begin{align}
			\Pi E 
\lesssim	\epsilon e^{s_0-\tau}
<	\epsilon
, \quad \text{and}\quad
			1+e^{2(\Pi E)^{1/2}} 
\lesssim	1+e^{\epsilon^{1/2}} \lesssim 1
\end{align}
for sufficiently small \(\epsilon\).
The bound on \(\Pi\) and the fact that \(\Pi,Y > 0\) together imply that, for \(a<0\),
\begin{align}
				\int_{s_0}^s e^{(a-1/2)\tau} \Pi_\tau \, d\tau 
\lesssim	& \epsilon^{1/2}	e^{s_0/2}\int_{s_0}^s e^{a\tau} \, d\tau 
\leq	C(a)		\epsilon^{1/2}	e^{(a+1/2)s_0}  
\end{align}
and similarly
\begin{align}
				\int_{s_0}^s e^{(a-5/2)\tau} Y_\tau \, d\tau 
\lesssim	& e^{(a-5/2)s} Y(s) -  e^{(a-5/2)s_0} Y(s_0)  	-(a-5/2)\int_{s_0}^s e^{(a-5/2)\tau} Y \, d\tau \\
\lesssim	& \epsilon^{1/2}\left[e^{as+s_0/2}  	-(a-5/2)e^{s_0/2}\int_{s_0}^s e^{a\tau} \, d\tau \right]\\
\lesssim	& \epsilon^{1/2}\left[e^{as+s_0/2}  	-\frac{a-5/2}{a}\left( e^{as+s_0/2} - e^{(a+1/2)s_0} \right) \right]\\
\lesssim	& \epsilon^{1/2}\left[e^{as+s_0/2}  	+C(a) e^{(a+1/2)s_0}  \right]\\
\lesssim	&	 C(a) \epsilon^{1/2} e^{\left(a+\frac{1}{2}\right)s_0} . 
\end{align}

\item[Bound on $\Lambda$]

To improve inequality \eqref{boot correction}, first note that \(\left| \frac{\Pi E}{ H} - 1\right| = \frac{\Pi}{H} |\Lambda| \).
Then we may use the estimate of the correction in inequality \eqref{correction bound 3} to obtain
\begin{align}
			   \frac{\Pi }{ H } |\Lambda|
\lesssim		&	\frac{\Pi }{ H } \left[e^{-2\tau}|A| +  e^{-\tau}\left( 1 + e^{2 (\Pi E)^{1/2}} \right)\Pi E \right] \\
\lesssim	&	\frac{\Pi }{ H }\left[ e^{-2\tau} +  e^{-\tau}\Pi E \right] \\
\lesssim &  \epsilon^{1/2} e^{s_0/2}e^{3\tau/2} \left[  e^{-2\tau} +  e^{s_0-2\tau} \epsilon \right]\\
\lesssim &      \epsilon^{1/2} e^{s_0/2} e^{-\tau/2}+    \epsilon^{3/2} e^{3s_0/2}e^{-\tau/2}  \\
\lesssim &      \epsilon^{1/2} +    \epsilon^{3/2} e^{s_0} \\
\lesssim &      \epsilon^{1/2} 
\end{align}
since \(H^{-1} \lesssim e^{\tau}\).
Thus we may ensure 
\begin{align}
\left| \frac{\Pi E}{ H} - 1\right|  < 2
\end{align}
for \(\epsilon\) small.

\item[An Upper and Lower Bound on $H$]
For the energy $H$ we have the following estimate:
\begin{align}
\left| \partial_\tau \left( e^\tau H \right)  \right|
\lesssim e^\tau H \frac{\Pi E}{H} F+ \widetilde F 
\lesssim& e^\tau H  F+ \widetilde F
\end{align}
That is, there is a constant \(C >0\) such that
\begin{align}
\left| \partial_\tau \left( e^\tau H \right)  \right|
\leq& C  \left( e^\tau H  F+ \widetilde F\right).
\end{align}
The quantities \(F\) and \(\widetilde F\) are the nonnegative quantities defined in equations \eqref{f def} and \eqref{f tilde def}.
We then apply Lemma \ref{gronwall} to obtain the upper bound
\begin{align}
e^s H(s) \leq &\left(e^{s_0} H(s_0) +C  \int_{s_0}^s \widetilde F \, d\tau \right)\exp\left( C  \int_{s_0}^s F \, d\tau \right)\label{bound 1.1}
\end{align}
and the lower bound
\begin{align}
e^s H(s) \geq &2e^{s_0} H(s_0)-\left(e^{s_0} H(s_0) +C  \int_{s_0}^s \widetilde F \, d\tau \right)\exp\left(C  \int_{s_0}^s F \, d\tau\right).\label{bound 1.2}
\end{align}
What we want, then, is for \(\int_{s_0}^\infty\widetilde F \, d\tau\) to be bounded and for \( \int_{s_0}^\infty   F \, d\tau \to 0 \) as \( \epsilon \to 0\).

Recall that we have assumed \( e^{-\rho_0/2} < 1\) and \(|A| < 1\).
We compute
\begin{align}
 \widetilde F 
 =&  |A|\Pi\left(e^{-\tau}\left(1 +  \frac{\Pi_\tau}{\Pi}\right)  + e^{-\rho_0/2}  E^{1/2} \right) \\
 <&  e^{-\tau}\left(\Pi +  \Pi_\tau \right)  +  \Pi^{1/2} (\Pi E)^{1/2}  \\
\lesssim&  \epsilon^{1/2} e^{s_0/2}e^{-\tau/2}  +  \epsilon^{3/4} e^{3s_0/4}e^{-\tau/4} 
\end{align}
so
\begin{align}
\int_{s_0}^\infty \widetilde F \, d\tau  
\lesssim	&	\epsilon^{1/2} + \epsilon^{3/4} e^{s_0/2} \lesssim \epsilon^{1/4} . \label{f tilde bound}
\end{align}
Let \(C_1\) be the product of \(C \) and the constant associated to the \(\lesssim\) in inequality \eqref{f tilde bound}.

Inequality \eqref{bound 1.1} becomes
\begin{align}
e^s H(s) 
\leq &\left(e^{s_0} H(s_0) + C_1  \epsilon^{1/4} \right)\exp\left( C  \int_{s_0}^s F \, d\tau \right)
\end{align}
and the lower bound \eqref{bound 1.2} becomes
\begin{align}
e^s H(s) 
\geq &2e^{s_0} H(s_0)-\left(e^{s_0} H(s_0) +C_1  \epsilon^{1/4} \right)\exp\left(C  \int_{s_0}^s F \, d\tau\right)
\end{align}
Now we turn to the bound on \(F\).
\begin{align}
				F 
=			&	\int_{S^1} e^{\rho - \tau} \rho_\tau J \, d\theta +e^{-\tau} \left(1+e^{2(\Pi E)^{1/2}}\right)\left( \Pi + \Pi_\tau \right)  + (\Pi E)^{1/2} + e^{-\rho_0/2}e^{2(\Pi E)^{1/2}}  \Pi  E^{1/2} \\
\lesssim	&	e^{-3\tau}\int_{S^1} e^{\rho+l +2 \tau} J \, d\theta +e^{-\tau} \left( \Pi + \Pi_\tau \right)  + (\Pi E)^{1/2} +  \Pi  E^{1/2}\\
\lesssim	&	e^{-3\tau}Y_\tau + \epsilon^{1/2} e^{s_0/2}e^{-\tau/2} +\epsilon^{3/4} e^{3s_0/4}e^{-\tau/4}
\end{align}
We have previously bounded the integral of the latter terms in time by \(\epsilon^{1/4} \), so it remains to compute
\begin{align}
			\int_{s_0}^\infty e^{-3\tau} Y_\tau \, d\tau
\lesssim	\epsilon^{1/2} .
\end{align}
So
\begin{align}
\int_{s_0}^\infty F \, d\tau \lesssim \epsilon^{1/4} .
\end{align}
Thus, in total for \(H\), we have
\begin{align}
 e^s H(s)< \frac{3}{2}\left(e^{s_0} H(s_0) +C_1  \epsilon^{1/4}  \right)
\end{align}
when we choose \(\epsilon\) small enough that \(\exp\left( C  \int_{s_0}^s F \, d\tau \right) < \frac{3}{2}\).

Turning to the lower bound, it is useful to define \(N:=e^{s_0}H(s_0) \) and \( L:=C_1 \epsilon^{1/4}\).
Note assumption \eqref{eng lower bound} implies that
\begin{align}
\frac{1}{4(N+L)}\left(5N + 3L\right) >1 + \frac{1}{4M(N+L)} > 1 + \frac{1}{4M^2} 
\end{align}
so we take \(\epsilon\) small enough that
\begin{align}
1 + \frac{1}{4M^2}  > \exp\left( C  \int_{s_0}^s F \, d\tau \right).
\end{align}
The lower bound from Gr\"onwall's inequality takes the form
\begin{align}
e^s H(s) \geq &2N-\left(N +L \right)\exp\left( C  \int_{s_0}^s F \, d\tau \right) > 2N-\frac{1}{4}\left(5N + 3L\right)  = \frac{3}{4} (N - L)
\end{align}
which improves the lower bound on \(e^\tau H(\tau)\).

\item[Bounds on $\Pi, Y$]

Let us determine what the smallness assumptions of Lemma \ref{small lemma} imply for the error term of the ODE system of Section \ref{ode chapter}.
Recall the conclusion of Proposition \ref{cd bound}: if \(H> 0\), then
\begin{align}
\label{cd estimate}
\left|\left( \begin{array}{l} c \\ d \end{array} \right)\right|(s) \lesssim & e^{(s_0-s)/4}\left( \frac{ e^{s_0}H(s_0) }{e^s H(s)}\right)^{1/2}\left|\left( \begin{array}{l} c \\ d \end{array} \right)\right|(s_0) +\int^s_{s_0}     e^{(\tau-s)/4}\left( \frac{e^\tau H(\tau) }{e^s H(s)}\right)^{1/2}|\omega(\tau)| \, d\tau,
\end{align}
where 
\begin{align}
|\omega| \lesssim \left| \widetilde \Omega \right| = &\left| -\frac{1}{2} \partial_\tau \log\left( e^\tau H\right)\left( \begin{array}{l}\frac{2}{\sqrt {10}} \\ \frac{1}{\sqrt {10}} \end{array} \right)
+\left( \begin{array}{l} 0 \\ \frac{f(d,c)}{\left(c+\frac{2}{\sqrt{10}} \right)^2 }  +\left( \displaystyle\frac{\Pi E}{H}-1\right)\frac{d+\frac{1}{\sqrt{10}}}{\left(c+ \frac{2}{\sqrt{10}}\right)^2}+ \displaystyle\frac{\Omega}{e^{3\tau} H^{1/2}} \end{array} \right)  \right|
\end{align}
and
\begin{align}
\left|e^{-3\tau} H^{-1/2} \Omega
\right|
\lesssim& e^{-2\tau} |H|^{-1/2} E Y_\tau.
\end{align}

To begin with, note that \(e^\tau H(\tau)\) has both upper and lower bounds, and so both terms of the form \(\left( \frac{e^\tau H(\tau) }{e^s H(s)}\right)^{1/2}\) can be bounded above by a constant:
\begin{align}
\label{cd estimate2}
				\left|\left( \begin{array}{l} c \\ d \end{array} \right)\right|(s) 
\lesssim	&	e^{(s_0-s)/4}\left|\left( \begin{array}{l} c \\ d \end{array} \right)\right|(s_0) +\int^s_{s_0}     e^{(\tau-s)/4}|\omega(\tau)| \, d\tau \\
\lesssim	&	\epsilon + \int^s_{s_0}     e^{(\tau-s)/4}|\omega(\tau)| \, d\tau .
\end{align}
To finish the bootstrap, we must bound the right side of this inequality strictly below \(\epsilon^{1/4}\).
We deal with each of the 4 summands in \(\omega\) in the remainder of the proof.

The contribution to the right side of \eqref{cd estimate} from the error term \(e^{-3\tau} H^{-1/2} \Omega\) is
\begin{align}
					\int^s_{s_0}     e^{(\tau-s)/4}\left|e^{-3\tau} H^{-1/2} \Omega \right| \, d\tau 
\lesssim		&	e^{-s/4} \int^s_{s_0}     e^{-7\tau/4} \left|H^{-1/2}  \Pi^{-1}  \right|\Pi E Y_\tau\, d\tau \\
=				&	e^{-s/4} \int^s_{s_0}     e^{-11\tau/4} \left|\frac{\Pi E}{H}  \frac{1}{c + \frac{2}{\sqrt{10}}} \right|  Y_\tau\, d\tau \\
\lesssim		&	e^{-s/4} \int^s_{s_0}     e^{-11\tau/4} \left|\frac{\Pi E}{H}   Y_\tau \right| \, d\tau  \\
\lesssim		&	e^{-s/4} \int^s_{s_0}     e^{(-1/4 - 5/2)\tau}   Y_\tau  \, d\tau \\
\lesssim		&	\epsilon^{1/2} e^{(s_0-s)/4}       \\
<		&	\epsilon^{1/2}  
\end{align}
where we have used the fact that \(\frac{e^\tau H^{-1/2}}{c + \frac{2}{\sqrt{10}}}=	\Pi^{-1}\) and the bootstrap assumptions.

The contribution from \(\left( \frac{\Pi E}{H}-1\right)\frac{d+\frac{1}{\sqrt{10}}}{\left(c+ \frac{2}{\sqrt{10}}\right)^2}\) is 
\begin{align}
					\int^s_{s_0}     e^{(\tau-s)/4}\left|\left( \frac{\Pi E}{H}-1\right)\frac{d+\frac{1}{\sqrt{10}}}{\left(c+ \frac{2}{\sqrt{10}}\right)^2} \right| \, d\tau 
\lesssim		&	\int^s_{s_0}     e^{(\tau-s)/4}\epsilon^{1/2} \, d\tau  \\
\lesssim		&	\epsilon^{1/2} \left( 1 - e^{(s_0-s)/4} \right) \\
<		&	\epsilon^{1/2} .
\end{align}

Turning to \(\frac{f(d,c)}{\left(c+\frac{2}{\sqrt{10}} \right)^2 }\), we recall that \(f\) has vanishing linear part, so
\begin{align}
					\int^s_{s_0}     e^{(\tau-s)/4}\left|\frac{f(d,c)}{\left(c+\frac{2}{\sqrt{10}} \right)^2 }\right| \, d\tau 
\lesssim		&	\int^s_{s_0}     e^{(\tau-s)/4}\epsilon^{1/2} \, d\tau 
\lesssim	\epsilon^{1/2}  \left( 1- e^{(s_0-s)/4}\right)
<			\epsilon^{1/2}
\end{align}

To bound \(\partial_\tau \log\left( e^\tau H\right)\), note that \(e^\tau H\) has a lower bound, and use the estimates on \(F\) and \(\widetilde F\) obtained above to compute
\begin{align}
					\left|\partial_\tau \log\left( e^\tau H\right)\right|
=				&	\frac{1}{e^\tau H}\left| \partial_\tau (e^\tau H )		\right|\\
\lesssim		&	\frac{1}{e^\tau H}\left( e^\tau H F + \widetilde F	\right)\\
\lesssim		&  F + \widetilde F	\\
\lesssim		&	e^{-3\tau}Y_\tau + \epsilon^{1/2} e^{s_0/2}e^{-\tau/2} +\epsilon^{3/4} e^{3s_0/4}e^{-\tau/4} 
\end{align}
So the contribution to \eqref{cd estimate} is
\begin{align}
					\int^s_{s_0}     e^{(\tau-s)/4}\left( e^{-3\tau}Y_\tau + \epsilon^{1/2} e^{s_0/2}e^{-\tau/2} +\epsilon^{3/4} e^{3s_0/4}e^{-\tau/4} \right)  \, d\tau 
\lesssim		&	\epsilon^{1/2} + e^{-s/4}\int^s_{s_0}      e^{-\tau/4-5/2\tau}Y_\tau  \, d\tau \\
\lesssim		&	\epsilon^{1/2} + e^{(s_0-s)/4} \epsilon^{1/2} \\
\lesssim		&	\epsilon^{1/2}. 
\end{align}
Combining these estimates, we have from inequality \eqref{cd estimate2} that
\begin{align}
				\left|\left( \begin{array}{l} c \\ d \end{array} \right)\right|(s) 
\lesssim	&	e^{(s_0-s)/4}\left|\left( \begin{array}{l} c \\ d \end{array} \right)\right|(s_0) +\int^s_{s_0}     e^{(\tau-s)/4}|\omega(\tau)| \, d\tau \label{cd bound last}\\
\lesssim	&	\epsilon + \epsilon^{1/2} \\
\lesssim	&	 \epsilon^{1/2} .
\end{align}
This improves the bootstrap inequality on \(c,d\).
\end{description}
Thus we have improved all of the bootstrap inequalities, and the proof is complete.
\qedhere
\end{proof}

\section{Asymptotic Behavior}

\label{main chapter}

We are now in a position to present the \(B_0\) version of the main result of \cite{MR3513138}.
In particular, for \(T^2\)-symmetric vacuum spacetimes satisfying \(B=0\), we find rates of growth/decay in the expanding direction for the \(\theta\)-direction volume, the normalized energy, and their derivatives.
In going from the polarised to \(B_0\) case, we appear to lose some of the fine grained asymptotics of \(V\) and its mean.
Forthcoming work will describe the behavior of \(V\) and \(Q\), and the dependence of that behavior on the conserved quantity \(B\).
Given our estimates above, the proof of the theorem is nearly identical to the polarised case.

\begin{thm}\label{main theorem}
There exists an $\epsilon_0$ such that if $0 \leq \epsilon \leq \epsilon_0$, for any \(B_0\) initial data set satisfying the smallness conditions of Lemma \ref{small lemma}, the associated solution satisfies for \(\tau \in [s_0 , \infty) \).
\begin{align}
				\left|\left( \begin{array}{l} c \\ d \end{array} \right)\right|
\lesssim	&	e^{-\tau/4}																					\label{cd bound2}		\\
				\left|e^\tau H - C_\infty^2 \right| 
\lesssim	&	e^{-\tau/4}																					\label{tauh bound}		\\
				\left| e^{-\tau/2}\Pi -\frac{2}{\sqrt{10}} C_\infty \right|
\lesssim	&	e^{-\tau/4}																					\label{pi bound}		\\
				\left|e^{-5\tau/2}Y - \frac{1}{\sqrt{10}} C_\infty \right|
\lesssim	&	e^{-\tau/4}																					\label{y bound}			\\
				\left| e^{3\tau/2}  E -\frac{\sqrt{10}}{2 }C_\infty  \right| 
\lesssim	&	e^{-\tau/4}																					\label{e bound}			\\
				\left|\langle l \rangle - l\right|		
\lesssim	&	e^{-\tau/2}																					\label{l bound 1}		\\
				\left| e^l -\frac{1}{2}\right|
\lesssim	&	e^{-\tau/4}																					\label{l bound 2}		\\
				\left| \langle V\rangle - V \right|	+ \left|e^{V - \tau} \left( \langle Q \rangle - Q \right) \right|									\label{v bound 2}
\lesssim	&	e^{-\tau/2}										\\
				\left| \Pi^{-1}e^\rho-e^{\rho_\infty} \right|									\label{rho bound}
\lesssim	&	e^{-\tau/2}							
\end{align}
for some $C_\infty>0$ and $\rho_\infty \colon S^1 \to \mathbb R$. 
\end{thm}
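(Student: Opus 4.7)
The plan is to upgrade the uniform-in-$\tau$ smallness of Lemma \ref{small lemma} to the pointwise decay rates claimed in the theorem, following the structure of the analogous polarised argument in \cite{MR3513138}. The first observation is that the estimates produced during the bootstrap are in fact already pointwise decay estimates; the summands of $F$ and $\widetilde F$ are all bounded by $\epsilon^{1/2}e^{s_0/2}e^{-\tau/2}$, $\epsilon^{3/4}e^{3s_0/4}e^{-\tau/4}$, or $e^{-3\tau}Y_\tau$, each of which is $O(e^{-\tau/4})$ or better. Consequently $\int_s^\infty F\,d\tau, \int_s^\infty \widetilde F\,d\tau \lesssim e^{-s/4}$. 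Applying Gr\"onwall's inequality to $|\partial_\tau(e^\tau H)| \leq C(e^\tau H\cdot F + \widetilde F)$ on the tail $[s, \infty)$ shows that $e^\tau H$ is Cauchy, converges to a positive limit $C_\infty^2$ (positivity from the lower bound of Lemma \ref{small lemma}), and satisfies \eqref{tauh bound}.

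With \eqref{tauh bound} in hand I return to Proposition \ref{cd bound}. Each of the four summands of $|\omega| \lesssim |\widetilde\Omega|$ carries rate $e^{-\tau/4}$: the $\partial_\tau\log(e^\tau H)$ term is dominated by $F + \widetilde F$; the $e^{-3\tau}H^{-1/2}\Omega$ term by \eqref{Omega bound}; the $(\Pi E/H - 1)$ factor by \eqref{correction bound 3} together with the now-known rates of $\Pi$ and $H$; and the quadratic $f(c,d)/(c+2/\sqrt{10})^2$ by $|(c,d)| \lesssim \epsilon^{1/4}$ and its vanishing linear part. Inserting $|\omega(\tau)| \lesssim e^{-\tau/4}$ into the estimate of Proposition \ref{cd bound} yields \eqref{cd bound2}. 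Estimates \eqref{pi bound}, \eqref{y bound}, and \eqref{e bound} then follow algebraically from the definitions of $c,d$ and from $H = \Pi(E + \Lambda)$ combined with \eqref{correction bound 3}.

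For the field asymptotics, inequality \eqref{l theta bound} gives $|l_\theta| \leq e^{\rho-\tau}J$, so
\begin{align}
\|l - \langle l \rangle\|_{C^0} \lesssim \int_{S^1} |l_\theta|\,d\theta \lesssim \int_{S^1} e^{\rho - \tau} J\,d\theta = e^\tau E \lesssim e^{-\tau/2},
\end{align}
which establishes \eqref{l bound 1}. To pin down $e^l$ itself, note that $\Pi_\tau = \langle e^{l+\rho}\rangle = e^{\langle l\rangle}\Pi(1 + O(e^{-\tau/2}))$ using \eqref{l bound 1}, and compare with $\Pi_\tau/\Pi = e^{-2\tau}Y/\Pi$; the rates \eqref{pi bound} and \eqref{y bound} force $\Pi_\tau/\Pi \to 1/2$ at rate $e^{-\tau/4}$, whence $e^{\langle l\rangle} = 1/2 + O(e^{-\tau/4})$, and combining with \eqref{l bound 1} gives \eqref{l bound 2}. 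The bound on $V$ is \eqref{v sup bound} applied with the new rate of $\Pi E$; the bound on $Q$ uses the same chain of estimates as the proof of the $Q$-correction lemma in Section \ref{correction chapter} (factor out $e^{-V}$ pointwise, apply H\"older to $e^V Q_\theta$, and use smallness of $\|V - \langle V\rangle\|_{C^0}$). For \eqref{rho bound}, differentiate
\begin{align}
\partial_\tau\log(\Pi^{-1}e^\rho) = \rho_\tau - \Pi_\tau/\Pi = e^l - \Pi^{-1}\langle e^{l+\rho}\rangle,
\end{align}
which is bounded in $C^0$ by $\|l - \langle l \rangle\|_{C^0} \lesssim e^{-\tau/2}$, so $\log(\Pi^{-1}e^\rho)$ converges uniformly to some $\rho_\infty$ with rate $e^{-\tau/2}$.

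The main obstacle is ensuring consistency among the limits: the constants $C_\infty$, $2/\sqrt{10}$, $1/\sqrt{10}$, and $1/2$ appearing in the asymptotics of $H, \Pi, Y$, and $e^l$ must be mutually compatible with the exact identities $\Pi_\tau = e^{-2\tau}Y$ and $H = \Pi(E + \Lambda)$. This compatibility is enforced by the ODE fixed-point analysis of Section \ref{ode chapter}, together with the fact that the $e^{-\tau/4}$ decay of $(c,d)$ and the $e^{-\tau/2}$ decay of $|l - \langle l\rangle|$ are both fast enough that the product and balance relations propagate to the limits without generating new error.
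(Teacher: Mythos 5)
Your overall architecture matches the paper's, and most of the individual steps are sound: the convergence of \(e^\tau H\) from the integrability of \(F+\widetilde F\), the identification of \(e^{\langle l\rangle}\to\tfrac{1}{2}\) via \(\Pi_\tau/\Pi=e^{-2\tau}Y/\Pi\), and the direct argument for \eqref{rho bound} are all correct (the latter two being mild, legitimate variants of what the paper does). The gap is in the step where you claim that all four summands of \(\omega\) ``carry rate \(e^{-\tau/4}\),'' and in particular that the quadratic term \(f(c,d)/(c+2/\sqrt{10})^2\) does so ``by \(|(c,d)|\lesssim\epsilon^{1/4}\) and its vanishing linear part.'' The bootstrap gives \(|(c,d)|\lesssim\epsilon^{1/4}\) \emph{uniformly} in \(\tau\); it gives no decay of \((c,d)\) at all, so the best available bound at this stage is \(|f(c,d)|\lesssim|(c,d)|^2\lesssim\epsilon^{1/2}\), a constant. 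Feeding that into Proposition \ref{cd bound} returns only
\begin{align}
\left|\left(\begin{array}{l} c\\ d\end{array}\right)\right|(s)\lesssim e^{-s/4}+\epsilon^{1/2},
\end{align}
which is no improvement on the bootstrap conclusion and does not yield \eqref{cd bound2}.

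The missing idea is a Gr\"onwall self-improvement. One must bound the quadratic contribution by \(\int_{s_0}^s e^{(\tau-s)/4}\,|(c,d)|(\tau)\,d\tau\) (using \(|(c,d)|^2\lesssim|(c,d)|\)), so that Proposition \ref{cd bound} becomes an integral inequality for \(|(c,d)|\) itself; the integral form of Gr\"onwall's inequality then gives \(|(c,d)|(s)\lesssim s\,e^{-s/4}\lesssim e^{(\delta-1/4)s}\) for any \(\delta>0\), and reinserting this improved bound into the quadratic term and applying Gr\"onwall once more produces the clean rate \(e^{-s/4}\). This two-pass iteration is exactly how the paper closes the argument. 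Without it, \eqref{cd bound2} is unproved, and with it fall \eqref{pi bound}, \eqref{y bound}, \eqref{e bound}, and the identification of the constant \(\tfrac{1}{2}\) in \eqref{l bound 2}, all of which require that \(c\) and \(d\) actually tend to zero rather than merely remain small.
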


\begin{proof}
The proof proceeds as in \cite{MR3513138}.
First, observe that inequalities \eqref{pi estimate} and \eqref{y estimate} imply that
\begin{align}
e^{-\tau} \Pi + e^{-\tau} \Pi_\tau + e^{-3\tau} Y \lesssim e^{-\tau/2}.
\end{align}
Furthermore, \(\Pi E \lesssim e^{-\tau}\) and \(e^\tau H\) is bounded above and below by positive constants.
On the other hand
\begin{align}
\left| \partial_\tau \left( e^\tau H \right)  \right|
\lesssim  e^\tau H  F+ \widetilde F
\lesssim    F+ \widetilde F
\lesssim   e^{-\tau/4} + e^{-3\tau} Y_\tau .
\end{align}
The right side is integrable in \(\tau\), so let \( C_\infty := \lim_{\tau \to \infty} \sqrt{ e^\tau H} \).
Then
\begin{align}
\left|C_\infty^2 - e^\tau H \right|\lesssim \int_\tau^\infty \left| \partial_\tau \left( e^s H \right)  \right| \, ds \lesssim e^{-\tau/4}
\end{align}
giving \eqref{tauh bound}.

Note that \eqref{cd estimate} now reads
\begin{align}
				\left|\left( \begin{array}{l} c \\ d \end{array} \right)\right|(s) 
\lesssim &	e^{-s/4} +\int^s_{s_0}     e^{(\tau-s)/4}|\omega(\tau)| \, d\tau,
\end{align}
and that all of the terms of \( \int^s_{s_0}     e^{(\tau-s)/4}|\omega(\tau)| \, d\tau\) are now bounded by \(e^{-s/4}\) with the exception of
\begin{align}
\int^s_{s_0}     e^{(\tau-s)/4}\left|\frac{f(d,c)}{\left(c+\frac{2}{\sqrt{10}} \right)^2 }\right| \, d\tau  \lesssim \int^s_{s_0}     e^{(\tau-s)/4}	\left|\left( \begin{array}{l} c \\ d \end{array} \right)\right|^2 \, d\tau \lesssim \int^s_{s_0}     e^{(\tau-s)/4}	\left|\left( \begin{array}{l} c \\ d \end{array} \right)\right| \, d\tau  \label{f bound2}
\end{align}
since \(\left|\left(c,d \right)\right| \lesssim 1\).
So
\begin{align}
				\left|\left( \begin{array}{l} c \\ d \end{array} \right)\right|(s) 
\lesssim &	e^{-s/4} +\int^s_{s_0}     e^{(\tau-s)/4}|\omega(\tau)| \, d\tau \\
\lesssim &	e^{-s/4} +\int^s_{s_0}     e^{(\tau-s)/4}\left|\left( \begin{array}{l} c \\ d \end{array} \right)\right| \, d\tau .
\end{align}
Applying the integral version of Gr\"onwall's inequality gives
\begin{align}
				\left|\left( \begin{array}{l} c \\ d \end{array} \right)\right|(s) 
\lesssim &	e^{-s/4} + \int^s_{s_0}e^{-\tau/4} e^{(\tau-s)/4} \exp \left(\int^s_{\tau}     e^{(r-s)/4} \, dr \right)\, d\tau \\
\lesssim &	e^{-s/4} + e^{-s/4}\int^s_{s_0}  \exp \left(\int^s_{\tau}     e^{(r-s)/4} \, dr \right)\, d\tau  \\
\lesssim &	e^{-s/4} + se^{-s/4} \\
\lesssim &	e^{\left(\delta -\frac{1}{4}\right)s} 
\end{align}
for any \(\delta > 0\).
Inserting this improved estimate into \eqref{f bound2} and applying Gr\"onwall's inequality again gives \eqref{cd bound2}.
Combining this with \eqref{tauh bound} yields \eqref{pi bound} and \eqref{y bound}.

Recall that \(H = \Pi (E + \Lambda)\) and
\begin{align}
\left | \Lambda \right| \lesssim e^{-2\tau}|A| + e^{-\tau} \left( 1 + e^{2(\Pi E)^{1/2} } \right) \Pi E \lesssim e^{-2\tau}.
\end{align}
Then combine \eqref{tauh bound} and \eqref{pi bound} to obtain \eqref{e bound}.
The estimate \eqref{l bound 1} follows from \eqref{l theta bound} and \eqref{e bound}.
Estimate \eqref{v bound 2} follows directly from the Poincar\'e inequality and the bound on \(E\).

To estimate \(e^l\) let us note that
\begin{align}
				\left| \Pi  e^l  - e^{-2\tau}Y \right| 
=			&	\left| \int_{S^1} e^{\rho(\varphi)+l(\varphi)} \left( e^{l(\varphi)-l(\theta)} - 1 \right)   \, d\varphi\right|
\lesssim		e^{-2\tau} Y. \label{el bound 1}
\end{align}
One can then combine \eqref{el bound 1}, \eqref{pi bound}, and \eqref{y bound} to find that \(\sup_{S^1} e^l\) is bounded by a constant.
Then, we estimate again
\begin{align}
				\left| \Pi  e^l  - e^{-2\tau}Y \right| 
=			&	\left| \int_{S^1} e^{\rho(\varphi)} \left( e^{l(\varphi)} - e^{l(\theta)} \right)   \, d\varphi\right|
\lesssim		\Pi \left(\sup_{S^1}e^l\right) e^\tau E
\lesssim 1 \label{el bound 2}
\end{align}
and combine \eqref{el bound 2}, \eqref{pi bound}, and \eqref{y bound} to obtain \eqref{l bound 2}.
The proof of \eqref{rho bound} is identical to the one in \cite{MR3513138}, since we have the same bounds on \(\rho_{\theta \tau}\).
\end{proof}

\section{Numerical Evidence}

\label{numerical chapter}

The full Einstein Flow is a large, quasilinear system of partial differential equations about which it is difficult even to make conjectures.
This remains true even in the simplified \(T^2\)-symmetric case considered in this work.
It has been crucial to this work to base our conjectures on evidence garnered from numerical simulations of \(T^2\)-symmetric Einstein Flows.
We summarize this numerical work in this section.
A more detailed discussion of the numerical methods and results is the subject of a forthcoming paper.

Our code is a reimplementation of one previously developed by Berger to simulate \(T^2\)-symmetric spacetimes in the contracting direction \cite{MR1858721}, and then later in the expanding direction.
We reimplemented this code in OCaml\footnote{OCaml is a general purpose programming language developed primarily at INRIA. See \url{https://ocaml.org/}.}, and made a number of modifications to improve the accuracy and speed.
Most importantly, we developed code to produce solutions of the \(T^2\)-symmetric constraint equation via a random process, which allowed us to probe the behavior of generic \(T^2\)-symmetric Einstein Flows.

We have developed code which samples the constraint submanifold for the \(T^2\)-symmetric Einstein Field Equations in a fairly generic manner.
We have then evolved these initial data using a finite difference method.
This generic sampling has been a crucial element allowing us to determine that the assumption \(B=0\) was necessary for our main theorem, and otherwise develop our intuition about the solutions.
The simulations have the expected convergence properties upon refining the spatial resolution so we are confident that they are accurate approximations of solutions.
To obtain confidence that our simulations depict behavior which is generic for the class under consideration, we simulated on the order of 20 randomly chosen initial constraints solutions in each of the following classes: polarised, \(B_0\), and \(B\ne0\) \(T^2\)-symmetric.
The qualitative behavior depicted in Figures \ref{numerical plots rl} through \ref{numerical plots v} is observed to be the same for all simulations in that class.

It has been useful to plot the evolution of the following quantities along each of the numerical solutions.
\begin{align}
S := \partial_\tau \int_{S^1}  l \, e^{\rho - \tau / 2}d\theta , \quad 
&T :=  \partial_\tau \int_{S^1}  \rho \, e^{\rho - \tau / 2}d\theta , \\ 
E_V := \int_{S^1} \left[V_\tau^2 + e^{2(\tau-\rho)} V_\theta^2\right] \, e^{\rho - \tau / 2}d\theta , \quad 
&E_Q := \int_{S^1}e^{2(V-\tau)} \left( Q_\tau^2 + e^{2(\tau-\rho)}  Q_\theta^2 \right) \, e^{\rho - \tau / 2}d\theta ,\\
 W:= \log \left| \int_{S^1}  V_\tau \, e^{\rho - \tau / 2}d\theta\right|
\end{align}
These are not the quantities that were used in the proof of our main theorem, but they capture the dynamics of the system.
The volume form \(e^{\rho - \tau / 2}d\theta\) is used to smooth out the graphs (the integrals generally oscillate without this normalization).

\begin{figure}[H]
   \centering
   \subfloat[polarised]{
       \includegraphics[width=0.3\textwidth]{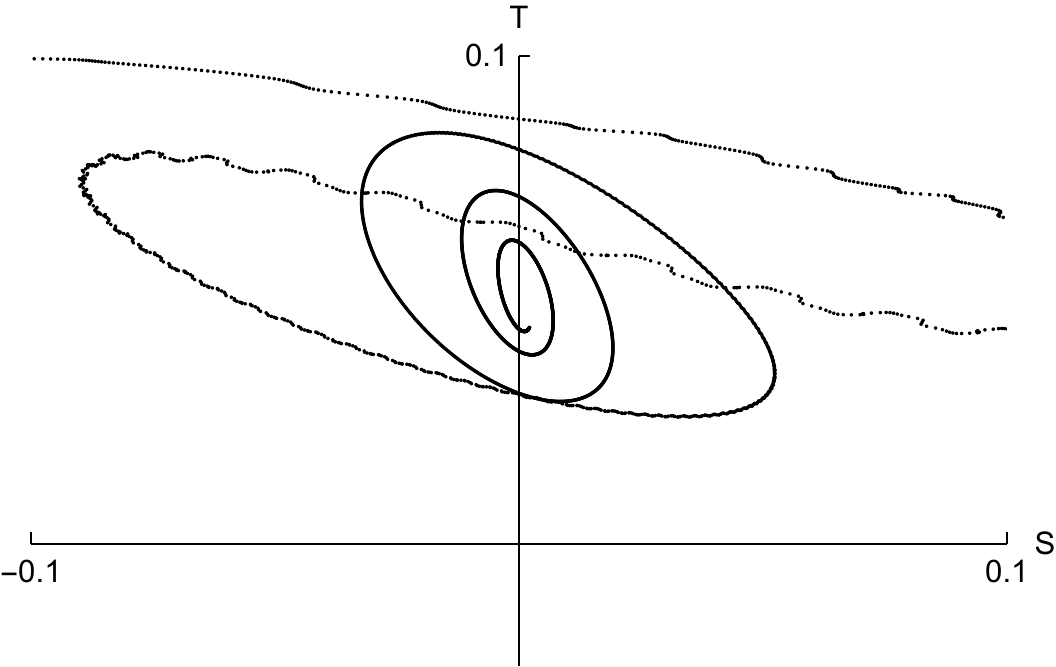}
   }
   \subfloat[\(B=0\)]{
       \includegraphics[width=0.3\textwidth]{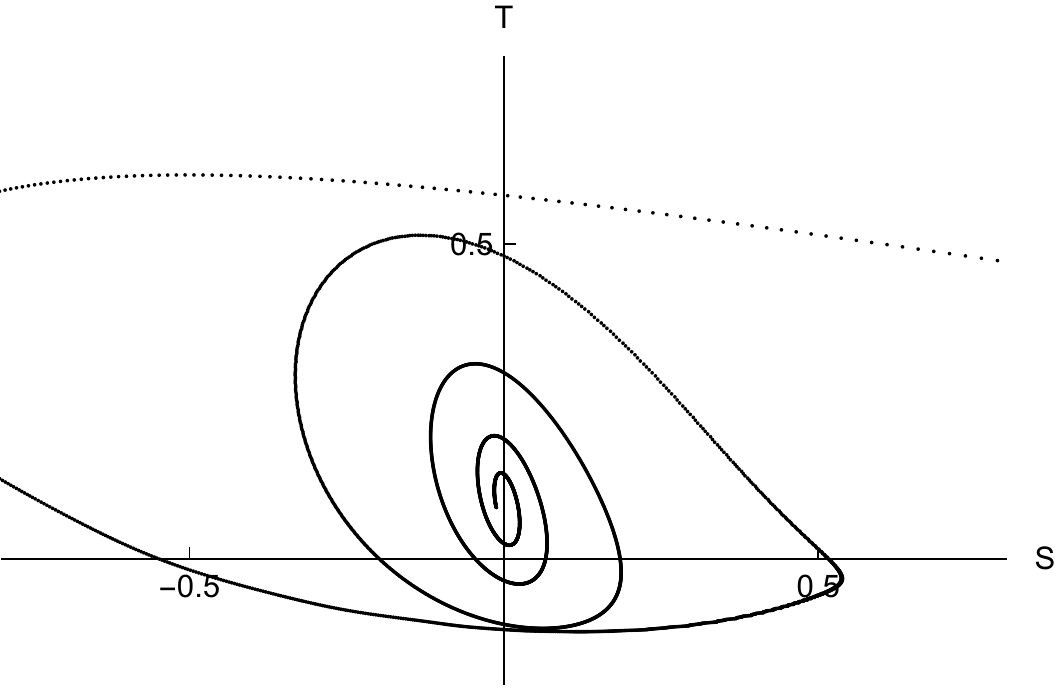}
   }
   \subfloat[\(B \ne 0\)]{
       \includegraphics[width=0.3\textwidth]{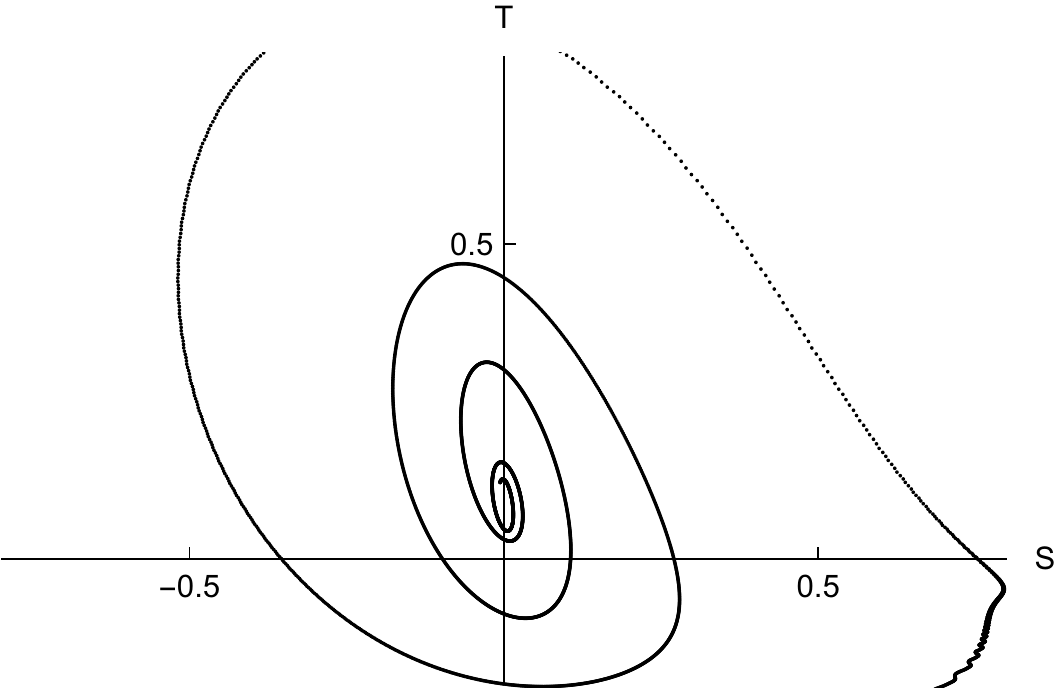}
   }
\caption{\(S\) and \(T\) flow toward a spiral sink, regardless of polarisation or the value of \(B\). Although \(l_\tau, \rho_\tau\) converge to the same values in all cases, the volume form \( e^{\rho - \tau/2} \, d\theta \) causes the variables used in the plots to flow toward different values.}\label{numerical plots rl}
\end{figure}
\begin{figure}[H]

   \centering
   \subfloat[polarised]{
       \includegraphics[width=0.3\textwidth]{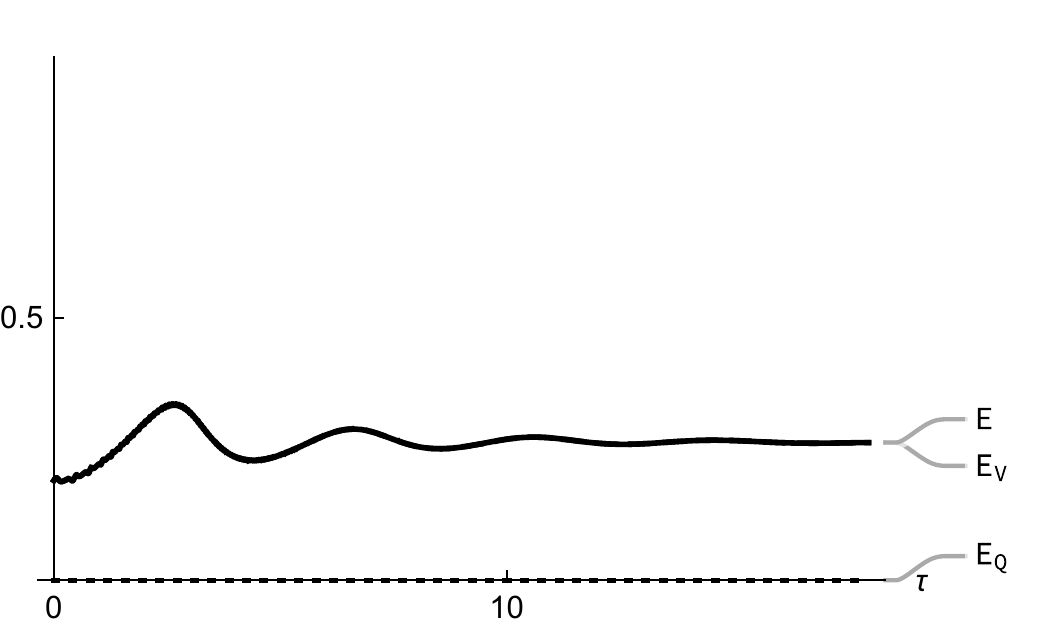}
   }
   \subfloat[\(B=0\)]{
       \includegraphics[width=0.3\textwidth]{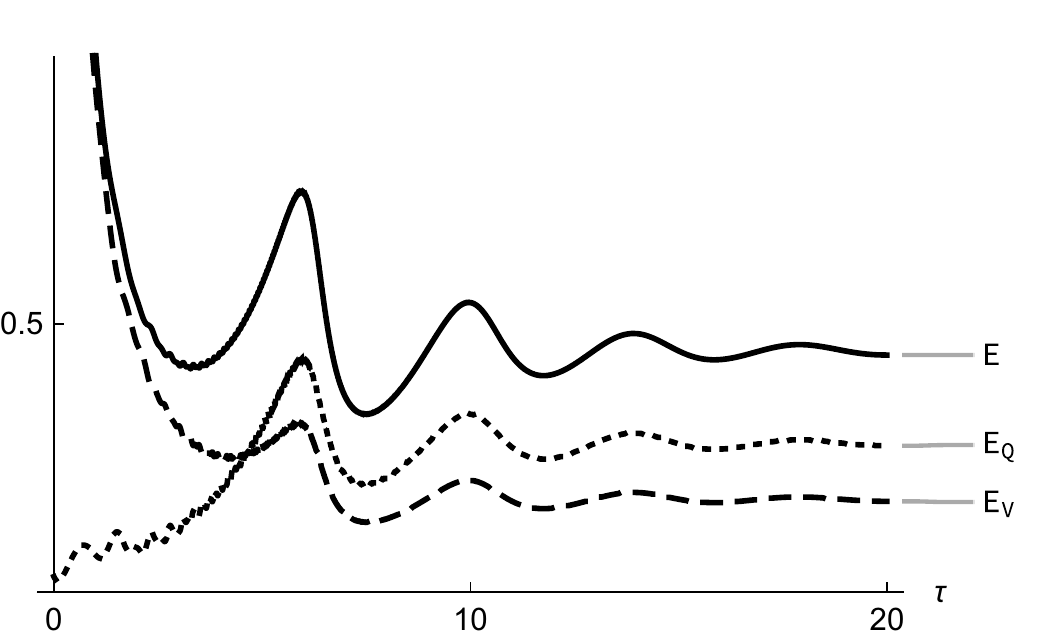}
   }
   \subfloat[\(B \ne 0\)]{
       \includegraphics[width=0.3\textwidth]{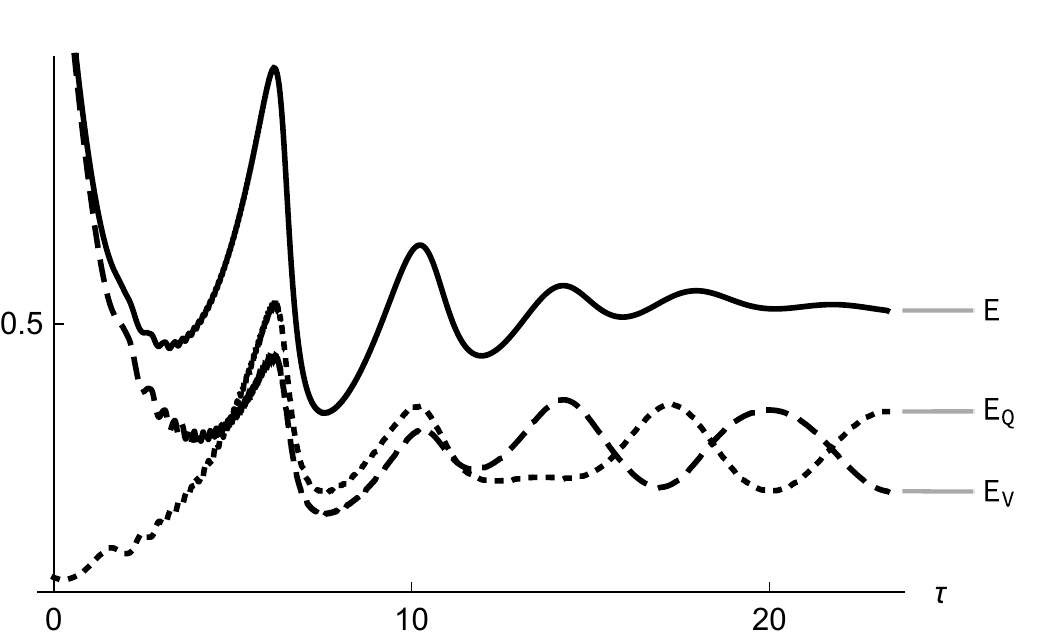}
   }
\caption{For polarised solutions, \(E = E_V\) which converges to a constant. For \(B=0\) solutions, \(E\) and the \(V\) and \(Q\) energies all converge to constants.
For \(B\ne0\) solutions, however, although the total energy converges, \(E_V\) and \(E_Q\) do not; they oscillate with amplitude which does not decay and period matching the period of the sink in Figure \ref{numerical plots rl}.}\label{numerical plots e}
\end{figure}
\begin{figure}[H]

   \centering
   \subfloat[polarised]{
       \includegraphics[width=0.3\textwidth]{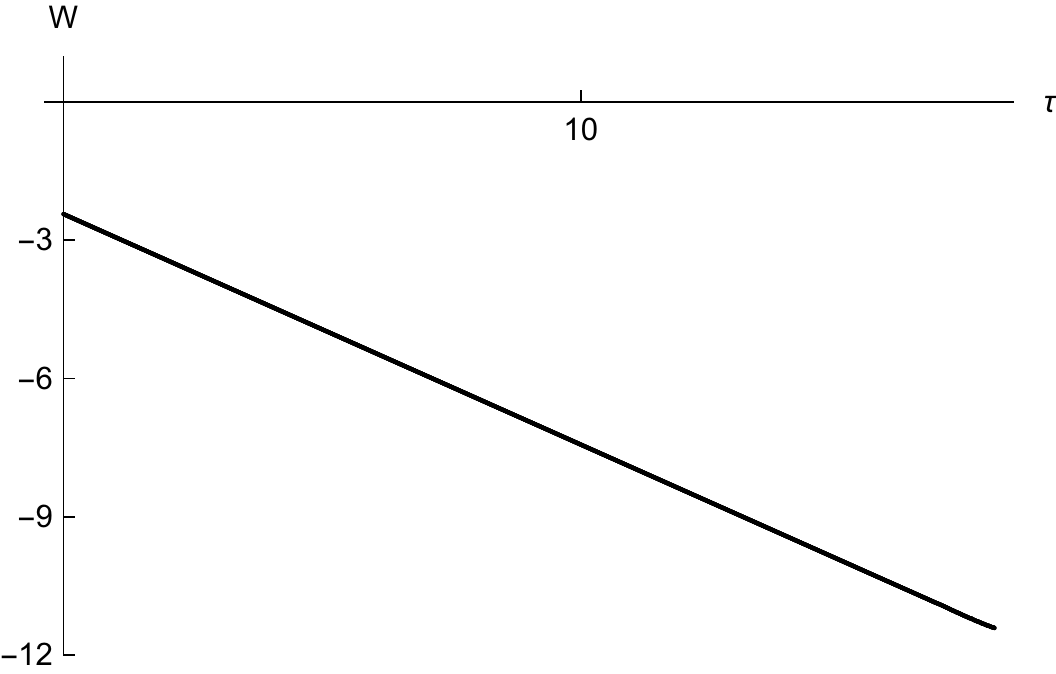}
   }
   \subfloat[\(B=0\)]{
       \includegraphics[width=0.3\textwidth]{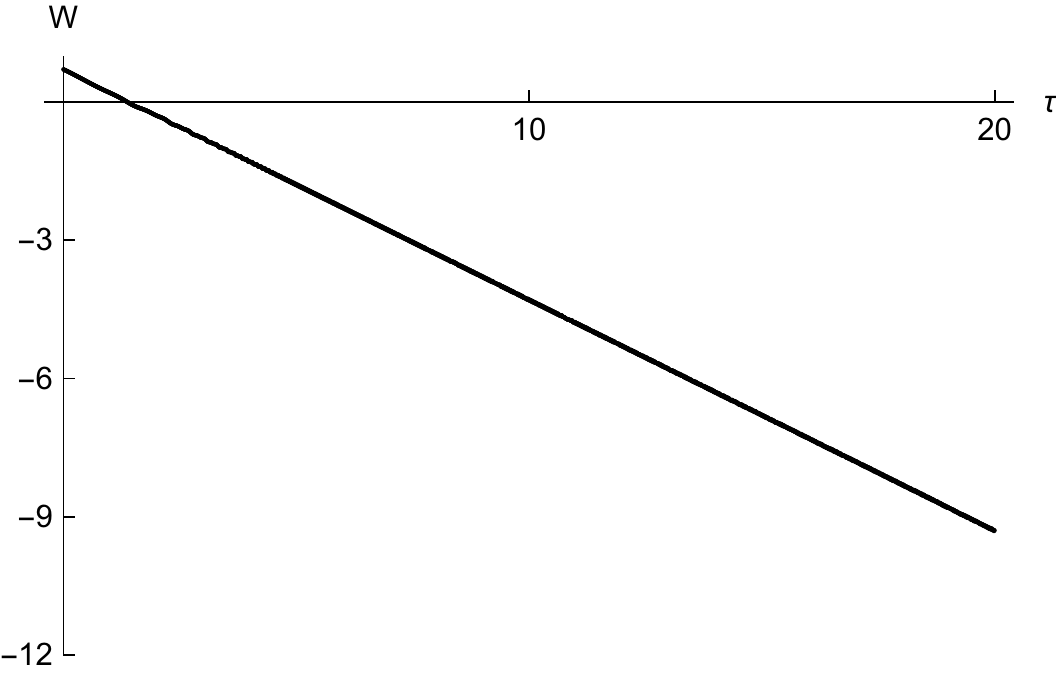}
   }
   \subfloat[\(B \ne 0\)]{
       \includegraphics[width=0.3\textwidth]{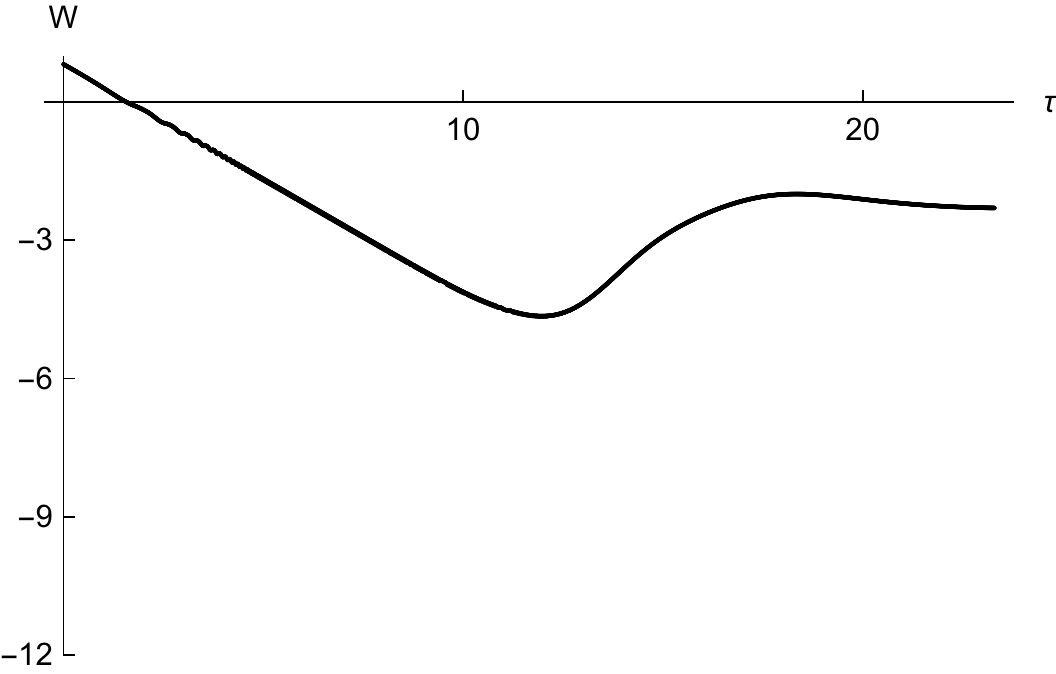}
   }
\caption{For \(B=0\) solutions (including polarised), \(V_\tau \to 0\) exponentially.
For \(B \ne 0\) solutions, however, \(V_\tau\) appears to converge to a nonzero constant.}\label{numerical plots v}
\end{figure}

In \cite{MR3513138}, the authors are able to determine the first order behavior of the energy and \(\Pi\), but also the first order behavior of \(V\) and the rate of its decay to the mean value.
We have generalized their results on the asymptotic values of the energy, \(\Pi\) as well as the decay of \(V\) and \(Q\) to their means to the \(B_0\) case, but so far have been unable to derive other estimates for \(V\) and \(Q\).
However, the numerical solutions that we have found have the property that there are constants \(a,C_V\) such that
\begin{align}
 \left| \langle V \rangle  - C_V \tau - a \right| = O(e^{-\tau/2})
\end{align}
and that
\begin{align}
C_V = \left \{ \begin{array}{ll} 0 & \text{if } B = 0 \\ \frac{1}{2} & \text{if } B \ne 0 \end{array}\right..
\end{align}
More detailed descriptions of the numerical results will be given in future work.

\appendix
\section{Concordance of notations between \cite{MR1474313}, \cite{MR1858721}, \cite{MR3312437}, and \cite{MR3513138}}

The Einstein Flows under consideration in the this work have been studied extensively, including many important special subsets of solutions.
Unfortunately, authors have used many different coordinates for exactly the same set of spacetimes, and this document adds yet another set of coordinates.
As an aid to the reader who wishes to read the cited works together, we provide in this appendix a concordance of notations used in the most frequently cited of these works.

To the best of our knowledge, all of the works in the table rely on the foliation and equations derived in \cite{MR1474313}.
This paper, \cite{MR1474313}, \cite{MR1858721} and \cite{MR3312437} use coordinates for \(T^2\)-symmetric Einstein Flows which are completely general.
The analysis in \cite{MR3513138} applies only to polarised \(T^2\)-symmetric Einstein Flows, and so relies on the assumption that some metric components vanish identically.
In \cite{MR2032917}, future asymptotics of Gowdy solutions are derived.
The notation used there is exactly the notation of \cite{MR3312437} if one imposes the conditions \(\alpha \equiv 1, K=0\) so we omit it from the table.

In the table below, each column uses the notation internal to the document named in the first row.
All of the expressions in a given row are equal.
For example, the function called \(P\) in \cite{MR3312437} has the expression \(2 U - \log R\) in \cite{MR3513138}.
Since \cite{MR3513138} only deals with polarised flows, the expressions in this column are only equal to those in other documents if the polarization condition is imposed.

\newpage
\phantom{A}
\vfill
\begin{center}
\begin{adjustbox}{angle=90}
{\renewcommand{\arraystretch}{3}
\begin{tabular}{p{0.16\textheight}p{0.16\textheight}p{0.16\textheight}p{0.10\textheight}p{0.22\textheight}}
	\cite{MR1474313}								&	\cite{MR1858721}																&	\cite{MR3312437}																							&	\cite{MR3513138}																&	this document					\\\hline
	$\log t$											&	\(-\tau\)																			&	$\log t$																										&	$\log R$																			&	$\tau$							\\\hdashline[0.5pt/5pt]
	$2U$												&	$P-\tau$																			&	$P+\log t$																									&	$2 U$																				&	$V$								\\\hdashline[0.5pt/5pt]
	$\alpha^{-1/2}$									&	\(2 \pi_\lambda\)																&	$\alpha^{-1/2}$																								&	$a^{-1}$																			&	$e^\rho$							\\\hdashline[0.5pt/5pt]
	$\displaystyle\frac{K^2}{2}t^{-2}\alpha e^{2\nu}$		&	$\displaystyle\frac{K^2}{2}e^{P + \frac{1}{2} \lambda + 3\tau / 2}  $			&	$\displaystyle\frac{K^2}{2}t^{-3/2}e^{P + \frac{1}{2} \lambda}  $											&	$\displaystyle\frac{K^2}{2}R^{-2}e^{2\eta}$												&	$e^l$								\\\hdashline[0.5pt/5pt]
	$2t U_t$											&	$1- \displaystyle\frac{\pi_P}{2\pi_\lambda}$							&	$tP_t + 1  $																									&	$2R U_R$																			&	$V_\tau$								\\\hdashline[0.5pt/5pt]
	$\displaystyle t^{-1}\int_{S^1} \alpha^{-1/2} e^{4U} Q_t \, d\theta$												&	$\displaystyle -\int_{S^1} \pi_Q  \, d\theta$											&	$\displaystyle \int_{S^1} \alpha^{-1/2} e^{2P}  tQ_t \, d\theta$												&	$0$																					&	$\displaystyle \int_{S^1}  e^{\rho+2(V-\tau)} Q_\tau  \, d\theta=: B$								\\\hdashline[0.5pt/5pt]
\end{tabular}
}
\end{adjustbox}
\end{center}
\vfill
\phantom{A}

\bibliographystyle{plain}	
\bibliography{bib}

\end{document}